\theoremstyle{plain}
\newtheorem{theorem}{Theorem}
\newtheorem{corollary}[theorem]{Corollary}
\newtheorem{lemma}[theorem]{Lemma}
\newtheorem{proposition}[theorem]{Proposition}
\theoremstyle{remark}
\newtheorem*{example}{\textbf{Example}}
\newtheorem{remark}[theorem]{Remark}
\numberwithin{equation}{section}
\newcommand\R{\mathbb R}
\newcommand\RR{\mathbb R}
\newcommand\ZZ{\mathbb Z}
\newcommand{\xx}{\mathbf x}
\newcommand{\nn}{\mathbf n}
\newcommand{\pO}{{\partial\Omega}}
\newcommand{\pM}{{\partial M}}
\newcommand\II{\text{II} }
\newcommand{\vol}{\mathrm{vol}}
\newcommand\Id{\mathrm{Id}}
\newcommand\etabar{\overline{\eta}}
\newcommand\taubar{\overline{\tau}}
\newcommand{\scal}{\mathcal{S}}
\title[Completeness of boundary traces of eigenfunctions]{Completeness of boundary traces of eigenfunctions}
\author{Xiaolong Han}
\address{Mathematical Sciences Institute, Australian National University, Canberra, ACT 2601, Australia}
\email{Xiaolong.Han@anu.edu.au}
\author{Andrew Hassell}
\address{Mathematical Sciences Institute, Australian National University, Canberra, ACT 2601, Australia}
\email{Andrew.Hassell@anu.edu.au}
\author{Hamid Hezari}
\address{Department of Mathematics, University of California, Irvine, CA 92697, USA}
\email{hezari@math.uci.edu}
\author{Steve Zelditch}
\address{Department of Mathematics, Northwestern University, Evanston, IL 60208, USA}
\email{zelditch@math.northwestern.edu}
\keywords{Dirichlet eigenfunctions, Neumann eigenfunctions, boundary values, completeness, wave kernel}
\subjclass[2010]{58J50, 35P10, 35P20, 35J05}
\begin{document}
\maketitle

\begin{abstract}
In this paper, we study the boundary traces of eigenfunctions on the boundary of a smooth and bounded domain. An identity derived by B\"acker, F\"urstburger, Schubert, and Steiner \cite{BFSS}, expressing (in some sense) the asymptotic completeness of the set of boundary traces in a frequency window of size $O(1)$, is proved both for Dirichlet and Neumann boundary conditions. 
We then prove a semiclassical generalization of this identity. 
\end{abstract}

\section{Introduction and main results}

Given a compact and smooth $n$-dimensional Riemannian manifold $(M,g)$ with smooth boundary, let $\{u_j\}_{j=1}^{\infty}$ be an orthonormal basis of eigenfunctions of the positive Dirichlet Laplacian $\Delta_D$ with eigenvalues
$$0<\lambda_1^2 < \lambda_2^2 \le \lambda_3^2 \le\cdots,$$
that is
$$\begin{cases}
\Delta u_j=\lambda_j^2u_j & \text{ in }M,\\
u_j=0 & \text{ on }\pM,\\
\langle u_j,u_k\rangle_{M}=\delta_{jk}.
\end{cases}$$
in which 
$$\Delta=\Delta_g=-\frac{1}{\sqrt g}\sum_{i,j=1}^n\frac{\partial}{\partial x_i}\left(g^{ij}\sqrt g\frac{\partial}{\partial x_j}\right)$$ 
is the (positive) Laplacian and $\langle\cdot,\cdot\rangle_{M}$ denotes the inner product in $L^2(M)$. We define $\psi_j$ to be the exterior normal derivative of $u_j$ on the boundary: 
$$\psi_j:=d_nu_j|_{\pM}.$$

We also study the Neumann Laplacian $\Delta_N$. In this case, we denote an orthonormal basis of eigenfunctions by $\{v_j\}_{j=1}^\infty$ and $\{\omega_j\}_{j=1}^\infty$ their boundary values. 

This paper is motivated by the following beautiful identity of B\"acker, F\"urstburger, Schubert, and Steiner \cite[Equations 53--55]{BFSS}, expressing a sort of asymptotic completeness property of the boundary traces $\psi_j$ of Dirichlet eigenfunctions. 

\begin{theorem}[Completeness of boundary traces of Dirichlet eigenfunctions]\label{thm:BFSS}
Let $\rho\in\mathcal S(\R)$ be such that $\hat\rho$ is identically $1$ near $0$, and has sufficiently small support.  Then for any $\phi\in C^\infty(\pM)$, we have 
\begin{equation}
\phi(x)=\lim_{\lambda\to\infty} \frac{\pi}{2}\sum_j\rho(\lambda-\lambda_j)\lambda_j^{-2}\langle\psi_j,\phi\rangle\psi_j(x),
\label{eq:BFSS}\end{equation}
where $\langle\cdot,\cdot\rangle=\langle\cdot,\cdot\rangle_{\pM}$ denotes the inner product in $L^2(\pM)$.
\end{theorem}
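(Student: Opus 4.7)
My strategy is to view the right-hand side of \eqref{eq:BFSS} as the Schwartz kernel
\[
K_\lambda(x,y)=\frac{\pi}{2}\sum_j \rho(\lambda-\lambda_j)\lambda_j^{-2}\psi_j(x)\psi_j(y), \qquad x,y\in\pM,
\]
paired against $\phi$, and to prove that $K_\lambda$ tends to the identity kernel $\delta_{\pM}(x-y)$ on $\pM\times\pM$ in the distributional sense as $\lambda\to\infty$; the stated identity then follows by testing against $\phi\in C^\infty(\pM)$. I first observe that $K_\lambda(x,y)$ equals the double normal derivative, restricted to $\pM\times\pM$, of the Schwartz kernel of $\frac{\pi}{2}\rho(\lambda-\sqrt{\Delta_D})\Delta_D^{-1}$, so Fourier inversion together with the compact support of $\hat\rho$ rewrites it as a short-time integral of the Dirichlet half-wave group. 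As a preliminary simplification, I replace $\lambda_j^{-2}$ by $\lambda^{-2}$ using $\lambda_j^{-2}=\lambda^{-2}+(\lambda^2-\lambda_j^2)/(\lambda^2\lambda_j^2)$; the extra factor $(\lambda-\lambda_j)\rho(\lambda-\lambda_j)=\tilde\rho(\lambda-\lambda_j)$ defines a Schwartz function $\tilde\rho(s)=s\rho(s)$ whose Fourier transform $-i\hat\rho'$ remains compactly supported, so the correction obeys the same wave-kernel analysis and is smaller by $O(\lambda^{-1})$.

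\medskip

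The core task is therefore to analyse
\[
\lambda^{-2}\sum_j \rho(\lambda-\lambda_j)\psi_j(x)\psi_j(y) \;=\; \frac{\lambda^{-2}}{2\pi}\int \hat\rho(t)\,e^{it\lambda}\,\bigl[\partial_{\nu_x}\partial_{\nu_y}U(t,x,y)\bigr]_{\pM\times\pM}\,dt,
\]
where $U(t,x,y)=\sum_j e^{-it\lambda_j}u_j(x)u_j(y)$ is the Dirichlet half-wave kernel. For $|t|$ smaller than half of the boundary injectivity radius---guaranteed provided $\supp\hat\rho$ is sufficiently small---a Hadamard/Chazarain--Melrose parametrix in Fermi coordinates $(x',x_n)$ near $\pM$ writes $U(t,x,y)$, modulo smoothing, as the difference of a direct oscillatory integral and its image under the normal reflection $y_n\mapsto -y_n$, modelled on the Euclidean half-space. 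The minus sign enforcing the Dirichlet boundary condition flips under $\partial_{\nu_y}$, so after applying $\partial_{\nu_x}\partial_{\nu_y}$ and restricting to $x_n=y_n=0$ the two terms add, producing principal symbol $2\xi_n^2$ in the oscillatory representation of $[\partial_{\nu_x}\partial_{\nu_y}U]_{\pM\times\pM}$.

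\medskip

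Carrying out the $t$-integration via $\int\hat\rho(t)e^{it(\lambda-|\xi|_g)}\,dt=2\pi\rho(\lambda-|\xi|_g)$ localises $|\xi|_g$ within $O(1)$ of $\lambda$. Passing to polar coordinates $\xi=r\omega$ on the cotangent fibre, the radial integrand carries a factor $r^{n+1}$ that concentrates at $r=\lambda$, extracting the leading behaviour
\[
K_\lambda(x,y)\;\sim\; \frac{\pi}{2}\cdot\frac{4\lambda^{n-1}}{(2\pi)^n}\int_{|\eta|<1}\sqrt{1-|\eta|^2}\,e^{i\lambda(x'-y')\cdot\eta}\,d\eta \;=\;\frac{\lambda^{n-1}}{(2\pi)^{n-1}}\int_{|\eta|<1}\sqrt{1-|\eta|^2}\,e^{i\lambda(x'-y')\cdot\eta}\,d\eta,
\]
where I have used the identity $\int_{S^{n-1}}\omega_n^2 e^{i\lambda(x'-y')\cdot\omega'}d\omega=2\int_{|\eta|<1}\sqrt{1-|\eta|^2}\,e^{i\lambda(x'-y')\cdot\eta}\,d\eta$, obtained by splitting $S^{n-1}$ into its two hemispheres. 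Rescaling $\eta\mapsto\eta/\lambda$ and taking $\lambda\to\infty$ recovers the Fourier inversion formula on $\R^{n-1}$ and identifies the limit as $\delta_{\pM}(x-y)$.

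\medskip

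The principal obstacle is the boundary parametrix step: I must justify the reduction of $U$ to the two-term (direct plus singly-reflected) form near $\pM$ and then propagate the two normal derivatives cleanly through the construction to pin down the leading symbol $2\xi_n^2$, controlling the subprincipal contributions that ultimately vanish in the Euclidean limit. The smallness hypothesis on $\supp\hat\rho$ is used precisely at this point: it rules out multiply-reflected geodesics, for which the Chazarain--Melrose construction is considerably more delicate. Granting the parametrix, the extraction of the Euclidean delta function and the resulting $C^\infty(\pM)$-convergence against any $\phi$ are routine.
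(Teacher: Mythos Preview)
Your approach is valid in outline but differs substantially from the paper's primary proof, and it carries one genuine gap that you underestimate.

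\textbf{Comparison with the paper.} The paper's main proof (Section~2) does not analyse the wave kernel $U(t,x,y)$ at all. Instead it extends $\phi$ harmonically to $w$ on $M$, uses Green's formula to rewrite $\langle\psi_j,\phi\rangle_{\pM}=-\lambda_j^2\langle u_j,w\rangle_M$ (which absorbs the awkward $\lambda_j^{-2}$ exactly, with no need for your $\lambda_j^{-2}\to\lambda^{-2}$ substitution), and is thereby reduced to studying $d_n v(t,\cdot)|_{\pM}$ for the single wave solution $v=\cos(t\sqrt{\Delta_D})w$. Because $w$ is harmonic but fails the Dirichlet condition, $v$ develops a conormal jump along $\{r=|t|\}$ which the paper captures by an explicit Heaviside ansatz in Fermi coordinates; the normal derivative at $r=0$ then produces $-2\phi(x)\delta(t)$ plus an $L^1_{\mathrm{loc}}$ remainder, and inverse Fourier transform finishes. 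This is entirely elementary: no FIO calculus, no parametrix, no glancing analysis. Your route---double normal trace of the half-wave kernel, geometric-optics parametrix, stationary phase---is instead essentially the paper's \emph{second} proof via Theorem~4 (Section~3), which the authors note implies Theorem~1. So your strategy is legitimate, but it trades the paper's hands-on construction for heavier microlocal machinery.

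\textbf{The gap.} Your claim that for $|t|$ below the boundary injectivity radius the Dirichlet half-wave kernel is ``direct minus reflected, modulo smoothing'' is false near the glancing set $\{|\eta|_{\tilde g}=1\}$. Geodesics nearly tangent to $\pM$ return to $\pM$ in arbitrarily short time, so no smallness of $\supp\hat\rho$ excludes their multiple reflections; in that regime the parametrix is of Melrose--Taylor (Airy) type, not a two-term geometric-optics sum. Consequently you cannot write $[\partial_{\nu_x}\partial_{\nu_y}U]_{\pM\times\pM}$ as a single oscillatory integral with symbol $2\xi_n^2$ uniformly over the whole ball $|\eta|<1$, and your displayed asymptotic for $K_\lambda(x,y)$ is not justified as written. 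The paper's Section~3 handles this by composing with a pseudodifferential cutoff $\chi$ microsupported in $\{|\eta|\le 1-\delta\}$, proving the conormal structure only after this cutoff (Lemma~9), and then observing separately that for fixed $\phi\in C^\infty(\pM)$ one may choose a semiclassical $A_h$ with symbol $1$ near zero frequency and supported away from $|\eta|=1$, so that $A_h\phi-\phi=O(h^\infty)$ and the glancing region contributes nothing in the limit. You need an analogous two-step argument; the single-step parametrix you invoke does not exist. The paper's Section~2 proof sidesteps the issue entirely because it never restricts the full wave kernel to $\pM\times\pM$.
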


This completeness result is also proved for any interior hypersurface $H$ in Theorem \ref{thm:interior} in \S \ref{H}.
The Cauchy data of $u_j$ on $H$ is defined as 
$$\begin{cases}
\text{Dirichlet data}: & \omega_j=u_j|_H,\\
\text{Neumann data}: & \psi_j=d_n u_j |_H,
\end{cases}$$
where $d_n$ is the normal derivative on $H$.

In \cite{BFSS}, the authors gave a justification for Theorem~\ref{thm:BFSS}, but it is not fully rigorous, as they used a Balian-Bloch approach involving sums of compositions of Green functions, but did not prove convergence of the infinite sums appearing in their formulae. In Section~\ref{sec:classical} we give an elementary, self-contained, and rigorous proof of this formula. In fact, we obtain the following improvement in which we identify the next term in the asymptotic expansion of the expression in \eqref{eq:BFSS}:
\begin{equation}\begin{gathered}
\frac{\pi}{2}\sum_j \frac{ \rho(\lambda-\lambda_j)\psi_j(y)\langle\psi_j,\phi\rangle }{\lambda_j^2} = \phi(y) - \frac1{2} \lambda^{-2} \left[\Delta_{\pM}-\frac14(n-1)^2H_y^2
+\frac12(n-1)(n-2)K_y\right] \phi(y)  \\  +  O(\lambda^{-3}),\quad\lambda\to\infty,
\end{gathered}\label{2ndterm}\end{equation}
where $\Delta_\pM$ is the induced (positive) Laplacian on $\pM$, and $H_y$ and $K_y$ are the mean and scalar curvatures of $\pM$ at $y$.

In addition, we give a straightforward modification of the proof to obtain the following analogue for Neumann boundary conditions:
\begin{proposition}[Completeness of boundary traces of Neumann eigenfunctions]\label{prop}
Let $\rho$ be as in Theorem~\ref{thm:BFSS}.  Then for any $\phi\in C^\infty(\pM)$, we have 
\begin{eqnarray*}
&&\frac{\pi}{2}\sum_j\rho(\lambda-\lambda_j)\langle\omega_j,\phi\rangle\omega_j(y)\\
&=&\phi(y) +\frac1{2} \lambda^{-2}\left[\Delta_{\pM}-\frac34(n-1)^2H_y^2+\frac12(n-1)(n-2)K_y\right]\phi(y)+  O(\lambda^{-3}),\quad\lambda\to\infty. 
\end{eqnarray*}
\end{proposition}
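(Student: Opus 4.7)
The plan is to mirror the proof of Theorem~\ref{thm:BFSS} and its refinement~\eqref{2ndterm} developed in Section~\ref{sec:classical}, tracking where the Neumann boundary condition changes the calculation. Fourier inversion in the spectral parameter rewrites the left-hand side as
$$\frac{\pi}{2}\sum_j\rho(\lambda-\lambda_j)\,\omega_j(y)\langle\omega_j,\phi\rangle=\int_{\pM}\mathcal{K}_N(\lambda;y,x)\,\phi(x)\,dS(x),$$
where, up to a fixed prefactor, $\mathcal{K}_N(\lambda;y,x)$ is the boundary-to-boundary restriction of the kernel of $\rho(\lambda-\sqrt{\Delta_N})$. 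In contrast with the Dirichlet case, no normal derivatives and no $\lambda_j^{-2}$ normalization appear, because $\omega_j$ has the same order as $v_j$ itself, which is precisely why no $\lambda_j^{-2}$ factor appears in the statement.

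The next step is to insert a short-time parametrix for the Neumann half-wave propagator $e^{-it\sqrt{\Delta_N}}$. Since $\hat\rho$ is compactly supported near the origin, only values of $|t|$ shorter than the return time to the boundary along any reflected geodesic contribute, so one may work inside a tubular neighborhood of $\pM$ and build the parametrix in boundary-normal (Fermi) coordinates by the method of images. The defining feature of the Neumann case is that the image is added with a plus sign rather than subtracted, so when both arguments lie on $\pM$ the image point coincides with the original and the parametrix at the boundary is \emph{twice} the corresponding free half-space kernel. This is the structural reason for the sign flip of the $\lambda^{-2}$ term relative to~\eqref{2ndterm}.

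With the parametrix in place, the computation reduces to a stationary-phase expansion on $\RR^{n-1}$ of exactly the same model integral analyzed in Section~\ref{sec:classical}. The leading term recovers $\phi(y)$, and the order $\lambda^{-2}$ correction decomposes into three pieces: a tangential Laplacian $\Delta_{\pM}\phi$ from the quadratic expansion of the boundary geometry at the stationary point, a scalar curvature term $K_y$ from the Hadamard subprincipal coefficient, and a mean curvature term $H_y^2$ from the transverse expansion of the induced metric. The first two yield the same coefficients inside the bracket as in the Dirichlet case, while the third absorbs additional curvature contributions from the reflected piece of the parametrix and produces $-\tfrac34(n-1)^2 H_y^2$ in place of the Dirichlet value $-\tfrac14(n-1)^2 H_y^2$.

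The main obstacle will be the careful bookkeeping of the order-$\lambda^{-2}$ terms. One must expand the Hadamard symbol, the phase function, and the induced metric $h_{ij}(x_n,x')=h_{ij}(0,x')+O(x_n)$ to second order in the normal direction and verify that the algebra produces exactly the coefficient $-\tfrac34(n-1)^2$ in front of $H_y^2$. Because the $\Delta_{\pM}$ and $K_y$ contributions transfer essentially verbatim from the Dirichlet computation, the genuinely new work lies in isolating the extra $-\tfrac12(n-1)^2 H_y^2$ created by the additional reflected piece on the boundary.
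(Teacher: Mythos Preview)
Your approach is genuinely different from the paper's. The paper does not build a parametrix for the half-wave propagator or invoke the method of images and stationary phase; instead it mirrors the Dirichlet argument of \S\ref{sec:classical} at the level of the \emph{wave equation with specific initial data}. Concretely: one fixes a harmonic function $w$ on $M$ with $d_n w|_{\pM}=\phi$ (after subtracting the mean of $\phi$), uses Green's formula to write $\langle\omega_j,\phi\rangle_{\pM}=\langle\Delta v_j,w\rangle_M$, and thereby reduces the sum to analyzing $\Delta v(t,\cdot)|_{\pM}$ where $v$ solves the Neumann wave equation with Cauchy data $(w,0)$. The singular part of $v$ is captured by an explicit Heaviside ansatz in Fermi coordinates, $u_N=kw+H(\pm t-r)\sum_{j\ge1}(\pm t-r)^jb_j$, with the $b_j$ determined recursively by transport equations plus the Neumann condition at $r=0$. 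One then reads off $\Delta v|_{r=0}$ term by term and inverse-Fourier-transforms in $t$; the curvature coefficient $-\tfrac34(n-1)^2H_y^2$ emerges from evaluating $(k\partial_r^2k-2(\partial_rk)^2)/k^2$ at $r=0$, not from an image-reflection argument.

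Your parametrix route is closer in spirit to the semiclassical argument of \S\ref{sec:semiclassical} than to \S\ref{sec:classical}, and can be made to work; Lemma~\ref{HZ-result} already gives the principal symbol. The advantage of the paper's method is that the $\lambda^{-2}$ term falls out of an elementary ODE recursion (for $b_1,b_2,b_3$) with no oscillatory-integral bookkeeping; the geometry enters only through $\partial_rk$ and $\partial_r^2k$. Your approach fits a more systematic FIO framework and makes the link to Theorem~\ref{thm:semiclassical} transparent, but at the cost of delicate subprincipal symbol calculus---the method of images is exact only for a flat boundary, so extracting the precise $-\tfrac34$ from curved-boundary corrections requires more than you have sketched. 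Finally, your reference to ``the same model integral analyzed in Section~\ref{sec:classical}'' is inaccurate: no stationary-phase model integral appears there.
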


In Section~\ref{sec:semiclassical}, we prove a semiclassical generalization of the above results. To state this theorem, let $K^D_\lambda$ be the operator appearing in the RHS of \eqref{eq:BFSS}:
$$K^D_\lambda=\frac{\pi}{2}\sum_j\rho(\lambda-\lambda_j)\lambda_j^{-2}\psi_j\langle\psi_j,\cdot\rangle,$$
and
$K^N_\lambda$ denote the corresponding operator in Proposition~\ref{prop}:
$$K^N_\lambda=\frac{\pi}{2}\sum_j\rho(\lambda-\lambda_j)\omega_j\langle\omega_j,\cdot\rangle.$$ 

Then we show: 
\begin{theorem}\label{thm:semiclassical}
Let $A_h$ be a semiclassical pseudo-differential operator on $\pM$, microsupported in $\{(y,\eta)\in T^*(\pM):|\eta|<1-\varepsilon_1\}$ for some $\varepsilon_1>0$. Let $\rho$ be such that $\hat\rho$ is supported sufficiently close to $0$ (depending on $\varepsilon_1$). Then 
\begin{enumerate}[(i).]
\item $A_hK^D_{h^{-1}}$ and $K^D_{h^{-1}} A_h$ are semiclassical pseudo-differential operators with principal symbol
\begin{equation}
\sigma(A)(1-|\eta|^2)^{1/2};
\label{Dsymbol}\end{equation}
\item $A_hK^N_{h^{-1}}$ and $K^N_{h^{-1}} A_h$ are semiclassical pseudo-differential operators with principal symbol
\begin{equation}
\sigma(A)(1-|\eta|^2)^{-1/2}.
\label{Nsymbol}\end{equation}
\end{enumerate}
\end{theorem}

\begin{remark} This is closely related to \cite{BFS} which is a sequel to \cite{BFSS}. The main result, equations (22) and (23), of \cite{BFS} is as follows: let $\Omega \subset \RR^2$ be a two-dimensional domain with smooth boundary, and let $h_j(q, p)$ be the $j$th semiclassical Husimi function associated to $\psi_j$: that is, 
$$
h_n(q, p) = \frac1{2\pi \lambda_n} \big| \langle \psi_j, c_{q, p, \lambda} \rangle \big|^2,
$$
the square (up to normalization) of the inner product of $\psi_j$ with a coherent state $c_{q, p, \lambda}$ on $\partial \Omega$, centred at $(q, p)$:
$$
c_{q, p, \lambda}(s) = \big(\frac{\lambda}{\pi}\big)^{1/4} \sum_{m \in \ZZ} e^{i\lambda p(s-q+mL) - \lambda(s-q+mL)^2/2}, \quad L = |\partial \Omega |,
$$
where $s$ is arc length on $\partial \Omega$. Then the following asymptotic relation is derived (nonrigorously) in \cite{BFS}: 
\begin{equation}
 \rho(\lambda - \lambda_j) \sum_{\lambda_j \leq \lambda} h_n(q, p) = \frac{k}{\pi^2} \sqrt{1 - |p|^2} + O(k^{1/2}), \quad |p| < 1. 
\end{equation}
Since the symbol of a semiclassical pseudodifferential operator $A$ can be expressed as 
$$
\sigma(A)(q, p) = \big\langle A_h c_{q, p, h^{-1}}, c_{q, p, h^{-1}} \big\rangle 
$$
(see \cite[Example 1, Section 5.1]{Zworski}), we see that this follows from \eqref{Dsymbol}. 
\end{remark}

\begin{remark} In Section~\ref{sec:semiclassical} we show that Theorem~\ref{thm:semiclassical} implies Theorem~\ref{thm:BFSS}: see Remark~\ref{Thms}.
\end{remark} 

\subsection{Applications to Kuznecov sum formulae}

Theorem \ref{thm:BFSS} has an immediate application to Kuznecov sum formulae. The general Kuznecov formula is a singularity expansion for the distribution
\begin{equation} \label{St} S_H(t) = \int_H \int_H \cos t \sqrt{\Delta_B} (t, q, q') dS(q') dS(q), \end{equation}
where $H \subset M$ is a smooth submanifold,  $dS$ is a density
on $H$ and 
$B$ denotes either  Dirichlet or Neumann boundary conditions. In \cite{Z2} in the boundaryless case, the singularities of $S_H(t)$ are shown to correspond  to trajectories of the geodesic flow which intersect $H$ orthogonally at two distinct times, and to be singular at the difference $T$ of these times. We refer to such trajectories  as H-{\it  orthogonal} geodesics. A natural
problem is to generalize the Kuznecov formula to manifolds with boundary. In the boundary case, $H$ could be an interior
hypersurface, or the boundary $\partial M$, or a hypersurface which intersects the boundary in a variety of ways. 

We now partially generalize the   Kuznecov formula to  compact Riemannian manifolds $(M, g)$ with smooth boundary
to the case where $H = \partial M$.  

\begin{theorem}\label{K} 
Let $(M,g)$ be compact Riemannian manifold with smooth boundary $\partial M$. 
Let $\{\psi_j\}$ and $\{ \omega_j \}$ be the sequence of boundary traces of normalized Dirichlet, resp. Neumann, eigenfunctions on $M$.  Let $\phi\in C_0^{\infty}(\partial M)$. Then
$$\sum_{\lambda_j<\lambda}\lambda_j^{-2}\big|\langle\phi,\psi_j\rangle\big|^2=\frac{2}{\pi} \lambda\|\phi\|_{L^2(\pM)}^2 + O_\phi(1),$$
resp. 
$$\sum_{\lambda_j<\lambda}\big|\langle\phi,\omega_j\rangle\big|^2 = \frac{2}{\pi}\lambda\|\phi\|_{L^2(\pM)}^2+O_\phi(1).$$
\end{theorem}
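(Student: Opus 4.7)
The plan is to deduce Theorem~\ref{K} by testing the smoothed completeness identities of Theorem~\ref{thm:BFSS} and Proposition~\ref{prop} against $\overline{\phi}$, and then applying a standard Tauberian argument to convert the smoothed asymptotic into a cumulative one. I describe the Neumann case in detail; the Dirichlet case is identical, carrying along the factor $\lambda_j^{-2}$ throughout.

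\emph{Step 1 (smoothed sum).} Multiplying Proposition~\ref{prop} by $\overline{\phi(y)}$ and integrating in $y \in \pM$ (the pointwise expansion is uniform in $y$, since its $\lambda^{-2}$ correction is a differential expression in $\phi$ and the local geometry of $\pM$) yields the smoothed Kuznecov identity
\begin{equation*}
  \frac{\pi}{2}\sum_j \rho(\lambda-\lambda_j)\,|\langle\omega_j,\phi\rangle|^2 \;=\; \|\phi\|_{L^2(\pM)}^2 + O_\phi(\lambda^{-2}).
\end{equation*}
Writing $d\mu_\phi := \sum_j |\langle\omega_j,\phi\rangle|^2\,\delta_{\lambda_j}$, this says $(\rho \ast d\mu_\phi)(\lambda) = \tfrac{2}{\pi}\|\phi\|^2 + O(\lambda^{-2})$.

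\emph{Step 2 (Tauberian inversion).} Since $d\mu_\phi$ is non-negative and its distribution function $N_\phi(\lambda) := \mu_\phi([0,\lambda])$ has at most polynomial growth (a consequence of Step~1 itself, which already forces $\mu_\phi([\lambda,\lambda+1]) = O_\phi(1)$), the classical Tauberian theorem of H\"ormander --- or equivalently the Safarov--Vassiliev Tauberian lemma --- applies and yields
\[
  N_\phi(\lambda) \;=\; \tfrac{2}{\pi}\,\|\phi\|_{L^2(\pM)}^2\,\lambda + O_\phi(1),
\]
which is the stated identity with $c = 2/\pi$. The Dirichlet case is handled identically, replacing $d\mu_\phi$ by $d\nu_\phi := \sum_j \lambda_j^{-2} |\langle\psi_j,\phi\rangle|^2\,\delta_{\lambda_j}$ and Proposition~\ref{prop} by Theorem~\ref{thm:BFSS}.

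\emph{Main obstacle.} The chief technical point is that the classical Tauberian theorem is cleanest for a non-negative $\rho$, whereas Theorem~\ref{thm:BFSS} and Proposition~\ref{prop} are stated for $\rho$ with $\hat\rho\equiv 1$ near $0$, a condition that cannot coexist with non-negativity of $\rho$ in any convenient way. The standard remedy is to observe that the proofs of Theorem~\ref{thm:BFSS} and Proposition~\ref{prop} go through verbatim (with the leading constant simply multiplied by $\hat\rho(0)$) for any Schwartz $\rho$ with $\hat\rho \in C_c^\infty$ narrowly supported and $\hat\rho(0) \ne 0$, and then to choose a non-negative such $\rho$ (for instance a Fej\'er-type kernel $\rho(t) = c\,\mathrm{sinc}^2(\varepsilon t)$, whose Fourier transform is a triangle supported in $[-2\varepsilon, 2\varepsilon]$) for the Tauberian step. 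With this flexibility, the $O(\lambda^{-2})$ decay obtained in Step~1 is far stronger than what the Tauberian step requires, and delivers the genuine $O_\phi(1)$ remainder rather than an $o(\lambda)$ statement.
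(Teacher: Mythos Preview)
Your proposal is correct and follows the same route the paper indicates immediately after the statement: ``Theorem~\ref{K} follows from Theorem~\ref{thm:BFSS} by taking the inner product with $\phi$ on both sides of the equations therein.'' You have simply made explicit the Tauberian passage from the smoothed sum to the sharp cumulative count, which the paper leaves implicit; your handling of the positivity issue for $\rho$ is the standard workaround and is fine. The alternative sketch in \S\ref{sec:K} of the paper proceeds instead by analysing the singularity of $S_\phi(t)$ at $t=0$ directly via the wave kernel, but this ultimately yields the same smoothed asymptotic you obtain in Step~1, so there is no substantive divergence.
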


Theorem \ref{K} follows from
 Theorem \ref{thm:BFSS}  by taking the inner product with $\phi$ on both sides of the equations therein.  In fact, Theorem \ref{thm:interior}
implies the partial  Kuznecov formula for interior hypersurfaces. The statement is similar to the above and is omitted.

The Kuznecov formula for $H = \partial M$ was used in \cite{JZ} to prove that the number of nodal domains on
non-positively curved surfaces with concave boundary tends to infinity along a density one subsequence of eigenvalues.
A self-contained proof was given in that special case, and moreover it was shown that  $S_{\partial M}(t)$ has an isolated,  conormal singularity at $t = 0$. We briefly sketch the proof in \S \ref{sec:K} for comparison.   Theorem \ref{K} should allow for further generalizations of
the nodal counting  results.  Note that the results only involve
the singularity expansion for some time interval $[0, \epsilon]$, and does not involve singularities corresponding to $\partial M$-orthogonal billiard trajectories for $t \not= 0$.

\subsection{Related mathematical literature}
Let us now discuss some related results in the mathematical literature. Theorem~\ref{thm:BFSS} suggests that the functions $\lambda_j^{-1} \psi_j$, in a suitably sized spectral window centred at $\lambda$, behave like an orthonormal basis as $\lambda \to \infty$. Recently some other results with a similar flavour have appeared. 
In \cite{B}, Barnett proved the following quasi-orthogonality result for the $\psi_j$:
\begin{theorem}[Pairwise quasi-orthogonality]
Let $\Omega \subset \RR^n$ be a Euclidean domain. Then there exists a constant $C$ depending only on $\Omega$ such that
$$\left|\int_{\pM}(\xx(y)\cdot\nn(y))\psi_i(y)\psi_j(y)dy-2\lambda_i^2\delta_{ij}\right|
\le C(\lambda_i^2 - \lambda_j^2)^2.$$
\end{theorem}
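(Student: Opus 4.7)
The plan is to apply the classical Rellich--Pohozaev multiplier identity twice, with the second application using a commutator trick to pick up the extra factor of $\lambda_i^2-\lambda_j^2$ needed for the stated bound.

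First I would let $X=\xx\cdot\nabla$ denote the radial vector field on $\RR^n$. Since $[\Delta,X]=2\Delta$ on $\RR^n$ (with $\Delta$ in the positive convention of the paper),
$$\Delta(Xu_i) = \lambda_i^2\,Xu_i+2\lambda_i^2\,u_i.$$
Applying Green's identity to the pair $u_j$ and $Xu_i$ and using the Dirichlet condition (so $\nabla u_i|_{\pO}=\psi_i\nn$ and hence $Xu_i|_{\pO}=(\xx\cdot\nn)\psi_i$) yields the Rellich identity
$$\int_{\pO}(\xx\cdot\nn)\psi_i\psi_j\,dS-2\lambda_i^2\delta_{ij} \;=\; (\lambda_i^2-\lambda_j^2)\int_\Omega u_j\,(Xu_i)\,dx.$$
By itself this only bounds the left-hand side by $O(|\lambda_i^2-\lambda_j^2|)$, so one spectral factor is still missing.

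The key step is to extract a second spectral factor by recognising $X$ itself as a commutator: the flat-space identity $[\Delta,|\xx|^2]=-4X-2n$ gives $Xu_i=-\tfrac14[\Delta,|\xx|^2]u_i-\tfrac n2\,u_i$. Inserting this into the interior integral above and moving $\Delta$ onto $u_j$ by Green's identity (the boundary contributions vanish because $u_i=u_j=0$ on $\pO$), the eigenvalue equation yields
$$\int_\Omega u_j\,(Xu_i)\,dx \;=\; \frac{\lambda_i^2-\lambda_j^2}{4}\int_\Omega u_j\,|\xx|^2 u_i\,dx-\frac n2\,\delta_{ij}.$$
Since $(\lambda_i^2-\lambda_j^2)\delta_{ij}=0$, combining with the Rellich identity collapses everything to
$$\int_{\pO}(\xx\cdot\nn)\psi_i\psi_j\,dS-2\lambda_i^2\delta_{ij} \;=\; \frac{(\lambda_i^2-\lambda_j^2)^2}{4}\int_\Omega u_j\,|\xx|^2 u_i\,dx.$$

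To close, I would bound the remaining integral by $\max_{\overline\Omega}|\xx|^2$ using Cauchy--Schwarz and the $L^2$-normalization of the $u_i$, giving the theorem with $C=\tfrac14\max_{\overline\Omega}|\xx|^2$. No real analytic obstacle arises; the conceptual heart of the argument is the observation $X=-\tfrac14[\Delta,|\xx|^2]-\tfrac n2$, which is precisely what allows one to double the order of cancellation and upgrade the naive $O(|\lambda_i^2-\lambda_j^2|)$ bound to $O((\lambda_i^2-\lambda_j^2)^2)$.
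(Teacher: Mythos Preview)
The paper does not actually prove this theorem; it is quoted from Barnett \cite{B} as background, so there is no in-paper proof to compare against. That said, your argument is correct and is in fact essentially Barnett's original proof: the Rellich identity gives the first factor of $\lambda_i^2-\lambda_j^2$, and the double-commutator observation $X=-\tfrac14[\Delta,|\xx|^2]-\tfrac n2$ (equivalently, iterating the commutator trick with the multiplier $|\xx|^2$) produces the second. Your computation of the constant $C=\tfrac14\max_{\overline\Omega}|\xx|^2$ is also right, and the boundary terms in the two applications of Green's identity vanish exactly as you claim because both $u_j$ and $|\xx|^2 u_i$ satisfy the Dirichlet condition.
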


If $|\lambda_i - \lambda_j|$ is small then $(\lambda_i^2 - \lambda_j^2)^2$ is small compared to $\lambda_i^2$. Therefore this result says that the boundary traces of Dirichlet eigenfunctions in a small frequency window are close to orthogonal with respect to a weighted inner product (which is positive definite for starshaped domains) on $\pO$. 

Another result along these lines is the following recently proved by the second author and Barnett \cite{BH}:

\begin{theorem}[Spectral window quasi-orthogonality]\label{thm:BH}
Let $\Omega$ be as above and let $c > 0$. There exists a constant $C$ depending only on $c$ and $\Omega$ such that the operator norm bound 
$$\Big\|\sum_{|\lambda_j - \lambda |\le c}\psi_j\langle\psi_j,\cdot\rangle_\pO\Big\|_{L^2(\pO)\to L^2(\pO)}\le C\lambda^2$$
holds for all $\lambda \ge1$.
\end{theorem}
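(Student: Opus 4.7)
My strategy is to reduce the operator bound on the window sum
$$T_\lambda:=\sum_{|\lambda_j-\lambda|\le c}\psi_j\langle\psi_j,\cdot\rangle_{\pO}$$
to the uniform $L^2$-boundedness of the smoothed operator $K^D_\lambda$ of Theorem~\ref{thm:semiclassical}, via a positivity argument; the second step is then to establish that uniform boundedness using the symbolic information provided by Theorem~\ref{thm:semiclassical} together with the well-known microlocal hyperbolic concentration of the boundary traces $\psi_j$.

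For the positivity reduction, I would choose $\rho\in\mathcal{S}(\R)$ non-negative (for instance $\rho=|\hat\chi|^2$ with $\chi\in C_c^\infty(\R)$ real), with $\hat\rho$ supported close enough to $0$ for Theorems~\ref{thm:BFSS} and~\ref{thm:semiclassical} to apply, and rescaled so that $\rho\ge\delta>0$ on $[-c,c]$ (the constants $\delta$ and the support of $\hat\rho$ are allowed to depend on $c$). Since each rank-one operator $\psi_j\langle\psi_j,\cdot\rangle$ is non-negative self-adjoint on $L^2(\pO)$ and $\rho(\lambda-\lambda_j)\lambda_j^{-2}\ge\delta/(\lambda+c)^2$ whenever $|\lambda_j-\lambda|\le c$, dropping the non-negative contributions from indices outside the window gives, as self-adjoint operators,
$$T_\lambda\;\le\;\frac{(\lambda+c)^2}{\delta}\sum_j\rho(\lambda-\lambda_j)\lambda_j^{-2}\psi_j\langle\psi_j,\cdot\rangle\;=\;\frac{2(\lambda+c)^2}{\pi\delta}\,K^D_\lambda.$$
Since $T_\lambda\ge 0$ is self-adjoint, this yields $\|T_\lambda\|_{L^2\to L^2}\le(2(\lambda+c)^2/\pi\delta)\,\|K^D_\lambda\|_{L^2\to L^2}$, and the theorem reduces to showing $\|K^D_\lambda\|_{L^2(\pO)\to L^2(\pO)}=O(1)$ as $\lambda\to\infty$.

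To prove this claim I set $h=\lambda^{-1}$ and decompose the identity as $I=A_h+B_h$, where $A_h$ is a semiclassical pseudodifferential operator on $\pO$ microsupported in $\{|\eta|<1-\varepsilon_1\}$ and identically $1$ (modulo $O(h^\infty)$) on $\{|\eta|\le 1-2\varepsilon_1\}$. Expanding $K^D_\lambda=(A_h+B_h)K^D_\lambda(A_h+B_h)$, Theorem~\ref{thm:semiclassical} identifies $A_hK^D_\lambda A_h$ as a semiclassical pseudodifferential operator with bounded principal symbol $\sigma(A_h)^2(1-|\eta|^2)^{1/2}$, hence uniformly $L^2$-bounded by the Calder\'on--Vaillancourt theorem; the cross terms $A_hK^D_\lambda B_h$ and $B_hK^D_\lambda A_h$ are compositions of pseudodifferential operators with essentially disjoint microsupports (hyperbolic versus glancing/elliptic), so they are $O(h^\infty)$ in operator norm.

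The main obstacle is the remaining term $B_hK^D_\lambda B_h$, because the microlocal analysis degenerates precisely at the glancing hypersurface $|\eta|=1$. The key input is the standard fact that the boundary trace $\psi_j$ of a Dirichlet eigenfunction is microlocally concentrated in the hyperbolic region after semiclassical rescaling: working in geodesic normal coordinates near $\pO$, the equation $(-h^2\partial_t^2-h^2\Delta_{\pO}+O(t)-1)u_j=0$ with $u_j|_{\pO}=0$ together with standard semiclassical elliptic/WKB estimates shows that $\|B'_h\psi_j\|_{L^2(\pO)}=O(\lambda_j^{-\infty})$ for any $B'_h$ microsupported in $\{|\eta|>1+\varepsilon\}$. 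Combining this with the Rellich identity bound $\|\psi_j\|_{L^2(\pO)}\le C\lambda_j$ (available for bounded Euclidean domains) and the Weyl count $\#\{\lambda_j\in[\lambda-c,\lambda+c]\}=O(\lambda^{n-1})$ controls the elliptic contribution to $B_hK^D_\lambda B_h$, while the narrow glancing strip $|\eta|\in[1-2\varepsilon_1,1+\varepsilon]$ is absorbed using the vanishing of the symbol $(1-|\eta|^2)^{1/2}$ of $K^D_\lambda$ there, via a limiting argument letting $\varepsilon_1\to 0$. Collecting the pieces yields $\|K^D_\lambda\|=O(1)$ and hence $\|T_\lambda\|\le C\lambda^2$, as required.
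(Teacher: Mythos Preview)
The paper does not prove this theorem at all: Theorem~\ref{thm:BH} is quoted from \cite{BH} as background, and the paper actually uses it as an \emph{input} --- see Corollary~\ref{cor}, where the uniform boundedness of $K^D_\lambda$ is deduced \emph{from} Theorem~\ref{thm:BH}, not the other way around. So your plan of proving Theorem~\ref{thm:BH} by first establishing $\|K^D_\lambda\|=O(1)$ reverses the logical flow of the paper, and there is no ``paper's own proof'' to compare with.

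Setting that aside, the positivity reduction in your first paragraph is perfectly sound, so the question is whether you can prove $\|K^D_\lambda\|=O(1)$ independently from Theorem~\ref{thm:semiclassical}. Here there is a genuine gap. Theorem~\ref{thm:semiclassical} says nothing about $K^D_\lambda$ near the glancing set $\{|\eta|=1\}$: the hypothesis that $A_h$ be microsupported in $\{|\eta|<1-\varepsilon_1\}$ is essential, because the boundary trace of the wave kernel ceases to be conormal at glancing (the singularity is of Airy type), and the support of $\hat\rho$ must be chosen small depending on $\varepsilon_1$. Consequently the remainder bounds in Theorem~\ref{thm:semiclassical} are not uniform as $\varepsilon_1\to 0$, so your ``limiting argument letting $\varepsilon_1\to 0$'' does not go through. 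Moreover, the quantity $(1-|\eta|^2)^{1/2}$ is the principal symbol of $A_hK^D_\lambda$, not a known symbol of $K^D_\lambda$ itself near glancing, so its vanishing there cannot be invoked to control $B_hK^D_\lambda B_h$. (A minor point: your claim that the cross terms $A_hK^D_\lambda B_h$ are $O(h^\infty)$ is also incorrect, since the microsupports of $A_h$ and $B_h$ overlap on the transition annulus; they are bounded pseudodifferential operators, which would suffice, but not residual.) The glancing contribution is exactly the hard part, and the proof in \cite{BH} handles it by an entirely different method --- commutator/Rellich-type identities for Helmholtz solutions rather than wave-kernel microlocal analysis.
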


To understand the implication of this result, it is helpful to recall that there is a lower bound of $c \lambda_j^2$ for the square of the $L^2$ norm of each individual $\psi_j$ (or equivalently, the operator norm of the rank one operator $\psi_j \langle \psi_j, \cdot \rangle$). This result says that adding up $\sim \lambda^{n-1}$ of these rank one operators increases the operator norm by at most a constant factor, independent of $\lambda$. This is only possible if the $\psi_j$ are approximately orthogonal to each other (cf. the Cotlar-Stein Lemma). 

We next mention some semiclassical results about the distribution of boundary traces of eigenfunctions.  Let $A_h$ be a semiclassical pseudo acting on $L^2(\pO)$ as in Theorem~\ref{thm:semiclassical}. Then the \emph{local Weyl laws} for the $\psi_j$ and $\omega_j$ are as follows \cite{HaZe}:
\begin{equation}\begin{gathered}
\lim_{\lambda \to \infty} \frac1{N(\lambda)} \sum_{\lambda_j \leq \lambda} h_j^2 \langle A_{h_j} \psi_j, \psi_j \rangle = \frac{4}{\vol(S^{n-1}) \vol(\Omega)} \int_{B^* \pO} \sigma(A) (1 - |\eta|^2)^{1/2} \, dy d\eta, \\
\lim_{\lambda \to \infty} \frac1{N(\lambda)} \sum_{\lambda_j \leq \lambda}  \langle A_{h_j} \omega_j, \omega_j \rangle = \frac{4}{\vol(S^{n-1}) \vol(\Omega)} \int_{B^* \pO} \sigma(A) (1 - |\eta|^2)^{-1/2} \, dy d\eta.
\end{gathered}\label{LWL}\end{equation}

Here, $h_j = \lambda_j^{-1}$, $|\eta|$ is measured using the induced metric on $T^*(\pO)$, and $N(\lambda)$ is the eigenvalue counting function for $\Omega$, that is, the number of $\lambda_j$ (counted with multiplicity) less than or equal to $\lambda$. The statement can be interpreted as saying that the boundary traces are semiclassically concentrated inside the ball bundle of $\pO$ (that is, with semiclassical frequencies $\leq 1$), and are distributed as $(1 - |\eta|^2)^{\pm 1/4}$ with sign $+$ for Dirichlet and $-$ for Neumann. This is not surprising since the symbol of $h d_n$,  the semiclassical normal derivative operator, restricted to the characteristic variety of $h^2\Delta - 1$, is $
(1 - |\eta|^2)^{1/2}$ at the boundary, so we expect this discrepancy between the Dirichlet and Neumann distributions. This gives an explanation for the different powers of $(1 - |\eta|^2)$ in the Dirichlet and Neumann cases of Theorem~\ref{thm:semiclassical}. 

The local Weyl law tells us that boundary traces $\lambda_j^{-1} \psi_j$ are composed of frequencies up to $\lambda_j$, and the results of Barnett \cite{B} and Barnett-Hassell \cite{BH} say that they are approximately orthogonal. Theorem~\ref{thm:BFSS} adds to these heuristics a completeness statement: the boundary traces $\lambda_j^{-1} \psi_j$, for $\lambda_j$ in a frequency window of fixed, suitably chosen size centred at $\lambda$, behave like an orthonormal basis of the finite dimensional space of functions on $\pO$ with frequencies up to $\lambda$.

%We now observe that a strengthened version of the local Weyl law \eqref{LWL} follows from Theorem~\ref{thm:semiclassical}. Consider the trace of $A_h K^D_{h^{-1}}$ or 
%A_h K^N_{h^{-1}}$. We can compute this asymptotically  using Theorem~\ref{thm:semiclassical} and the asymptotic
%$$
%\operatorname{tr} B \sim (2\pi h)^{-(n-1)} \int_{T^*(\partial M)} \sigma(B)(x, \xi) \, dx \, d\xi
%$$
%for the trace of a semiclassical pseudodifferential operator on $\partial M$ \cite[Chapter 9]{DS}. 
%We obtain a strengthened form of the local Weyl law in which we the LHS is replaced by 

Using standard techniques, the statements in \eqref{LWL} could be modified so that the LHS involves averages of the quantities $h_j^2 \langle A_{h_j} \psi_j, \psi_j \rangle$ or $\langle A_{h_j} \omega_j, \omega_j \rangle$ over a frequency window of fixed width, or alternatively involving a frequency window function $\rho(\lambda - \lambda_j)$ as in Theorems~\ref{thm:BFSS} and \ref{thm:semiclassical}. This modified statement then follows by taking the trace of the operator $A_h K^D_{h^{-1}}$ or $A_h K^N_{h^{-1}}$, using Theorem~\ref{thm:semiclassical} together with the asymptotic
$$
\operatorname{tr} B \sim (2\pi h)^{-(n-1)} \int_{T^*(\partial M)} \sigma(B)(x, \xi) \, dx \, d\xi
$$
for the trace of a semiclassical pseudodifferential operator on $\partial M$ \cite[Chapter 9]{DS}.  (Actually there is a slight discrepancy in the two statements, as from Theorem~\ref{thm:semiclassical} we would get $A_h$ rather than $A_{h_j}$ in the inner product, but this is insignificant due to the rapid decay of the window function $\rho(\lambda - \lambda_j)$.) 
Thus, Theorem~\ref{thm:semiclassical} can be viewed as a refinement of the local Weyl law, in the sense that it is an operator statement whose trace gives a version of the the local Weyl law.

\

When the billiard flow is ergodic, we can strengthen \eqref{LWL} to quantum ergodicity for boundary traces. This is the statement that there is a density one subset $J$ of positive integers such that, restricting the index $j$ to $J$, we have 
\begin{equation}\begin{gathered}
\lim_{j \in J \to \infty} h_j^2 \langle A_{h_j} \psi_j, \psi_j \rangle = \frac{4}{\vol(S^{n-1}) \vol(\Omega)} \int_{B^* \pO} \sigma(A) (1 - |\eta|^2)^{1/2} \, dy d\eta, \\
\lim_{j \in J \to \infty} \langle A_{h_j} \omega_j, \omega_j \rangle = \frac{4}{\vol(S^{n-1}) \vol(\Omega)} \int_{B^* \pO} \sigma(A) (1 - |\eta|^2)^{-1/2} \, dy d\eta.
\end{gathered}\label{QE}\end{equation}

This result was proved by G\"erard-Leichtnam \cite{GL} in the Dirichlet case, then by the second and fourth authors for general boundary conditions and Euclidean domains \cite{HaZe}, and by Burq \cite{Bur} for Riemannian manifolds. 

\

Under special dynamical assumptions one can give more precise results on the frequency localization of
the boundary traces. We thus consider the Fourier coefficients of boundary traces
of eigenfunctions relative to   eigenfunctions of the boundary Laplacian $\Delta_{\partial \Omega}$. That is,
we expand $\omega_j$ (resp. $\psi_j$) 
\begin{equation}\label{EXPAND} 
\omega_j(y) = \sum_{k= 0 }^{\infty} \langle \omega_j, \phi_k \rangle_{L^2(\partial \Omega)}
\; \phi_k 
\end{equation} 
in terms of an orthonormal basis of  boundary eigenfunctions   $\phi_k$ of $\Delta_{\partial \Omega}$ with $\Delta_{\partial \Omega} \phi_k = \Lambda_k^2 \phi_k$, and consider
the size of the Fourier coefficients $\langle \omega_j, \phi_k \rangle_{L^2(\partial \Omega)}$. At least heuristically, the matrix 
\begin{equation}\label{U} 
U_{\lambda} : = \begin{pmatrix} \langle \omega_j, \phi_k \rangle_{L^2(\partial \Omega)} \end{pmatrix}_{\Lambda_k \leq \lambda, \lambda_j \in [\lambda - C(\lambda), \lambda]},
\end{equation} 
of Fourier coefficients is approximately unitary (where $C(\lambda)$ is chosen so that the matrix is square). 
Indeed, it is the change of basis matrix from the approximately orthonormal boundary traces to the orthonormal basis of boundary eigenfunctions with eigenvalues $\leq \lambda$. A natural question is the extent to which the entries deviate from randomness. It is obvious that in symmetric situations such as balls,
where can define joint eigenfunctions of the symmetry and the Laplacian for both the interior and boundary eigenfunctions, the Fourier coefficients will peak when the the eigenfunctions share the same symmetry and vanish otherwise. In general, to measure the size of the Fourier coefficients, we let $A_{h_j}$ in \eqref{QE}  be of the form form $\rho (h_j  \sqrt{\Delta_{\partial \Omega}})$ where $h_j = \lambda_j^{-1}$ and where  $\rho \in C_0^{\infty}(\R_+)$ is a smooth cutoff supported in $[0, 1]$ and equal to 1 on a smaller interval.

In the ergodic case, one would not expect any frequency localization and would expect \eqref{U} to be
similar to a random unitary matrix. To a large degree, the results above prove that. In the case of Neumann eigenfunctions on domains with ergodic billiards, we obtain, for a density one subsequence of $\lambda_j$, 
\begin{equation}\label{EXPANDB}  
\sum_{k: \Lambda_k \leq \lambda_j}^{\infty}| \langle \omega_j, \phi_k \rangle_{L^2(\partial \Omega)}
|^2  \; \rho\left(\frac{\Lambda_k}{\lambda_j}\right)   \to \frac{4}{\vol(S^{n-1}) \vol(\Omega)} \int_{B^* \pO}  \rho(|\eta|)  (1 - |\eta|^2)^{-1/2} \, dy d\eta. 
\end{equation} 

This shows that the squares $|\langle \omega_j, \phi_k \rangle_{L^2(\partial \Omega)}|^2$ of the Fourier coefficients with $\Lambda_k \leq \lambda_j$ are asymptotically of size 
$$
\Big(1 - \big(\frac{\Lambda_k}{\lambda_j}\big)^2\Big)^{-1/2} \frac{4}{\vol(S^{n-1}) \vol(\Omega)} \frac{\vol(B^* \partial \Omega)}{N_{\partial \Omega}(\lambda_j)} \sim \Big(1 - \big(\frac{\Lambda_k}{\lambda_j}\big)^2\Big)^{-1/2} \Big(\frac{2\pi}{\lambda_j}\Big)^{n-1} \frac{4}{\vol(S^{n-1}) \vol(\Omega)} 
$$
in an averaged sense. 
%$, i.e. the reciprocal of the number of boundary eigenvalues $\leq \lambda_j$, which of course equals $\vol (\partial \Omega) \lambda_j^{n-1}$ asymptotically. 
Moreover, the Fourier components are rapidly decaying for $\Lambda_k\gg\lambda_j$. The latter statement holds without any dynamical assumptions, but the size of the Fourier coefficients $|\langle \omega_j, \phi_k \rangle_{L^2(\partial \Omega)}|
^2$ will in general depend on the billiard dynamics and reflect the extent to which the frequencies of the boundary traces localize. 

In   \cite{TZ1, ctz},  the quantum ergodicity theorem for Cauchy data of \cite{HaZe,Bur,GL} along the boundary
is generalized to any hypersurface $H$  if  the billiard (geodesic) maps in $T^*(M)$ is ergodic.  For a general hypersurface, there are two components to the Cauchy data,
and quantum ergodicity refers to the pair. In \cite{TZ2}, the  Quantum Ergodic Restriction (QER)  is proved
for the individual Dirichlet and Neumann  if the hypersurface satisfies    an asymmetry condition with 
respect to the geodesic flow.  This condition is not needed in Theorem \ref{thm:interior}  to prove that Dirichlet and Neumann data are individually complete. The QER theorem is of a different nature than the completeness result since it
concerns individual eigenfunctions rather than averages over the spectrum, and its proof uses the long time behaviour of the wave kernel and not just the singularity at $t=0$.
  
There should  also exist  pointwise Weyl laws for boundary traces of eigenfunctions. They are stated in  \cite{Z2,TW}
but are not proved there.  We use the notation $u_j^b = \lambda_j^{-1} \psi_j$ for boundary traces of Dirichlet eigenfunctions and $u_j^b = \omega_j$ for boundary traces of Neumann eigenfunctions. Proposition 2.1 of \cite{Z2} states that there exist positive constants $C_D, C_N$ (depending only on the dimension) so that
$$\sum_{j: \lambda_j \leq \lambda}
|u_j^b(y)|^2  = \left\{\begin{array}{ll}  
%C_D \lambda^{n + 2} + O(\lambda^{n+1}), & \mbox{Dirichlet,}
C_D \lambda^{n } + O(\lambda^{n-1}), & \mbox{Dirichlet,}
\\ & \\
 C_N \lambda^n + O(\lambda^{n-1}), & \mbox{Neumann}.
\end{array} \right.$$
At least in the case of concave boundary, the proof in \cite{M} of the Weyl law for manifolds with concave boundary
should adapt to the boundary traces.

Moreover, the estimates of the remainder terms $R_{\lambda}(y) $ can be strengthened from $O$ symbols to $o$ symbols if  the set of loops with footpoint $y \in \partial \Omega$ has measure $0$
in $B^*_y\partial \Omega$. 
The  jump in the remainder $R_{\lambda}(y) $ is given by 
\begin{equation}\label{eig3} 
\sum_{j: \lambda_j = \lambda} |u_j^b(y)|
=\sqrt{R(\lambda,y)-R(\lambda-0^+,y)}.
\end{equation}

In \cite{SoZ} it is shown that
$$\sup_{y \in \partial \Omega} |u_j^b(y)| = o (\lambda^{(n-1)/2}), $$ 
if the set of billiard loops with footpoint at  $y \in \partial \Omega$ has measure $0$
in $B^*_y\partial \Omega$. 

\

\subsection*{Acknowledgements}  We thank Alex Barnett for helpful conversations. We also acknowledge the support of the Australian Research Council through a Future Fellowship FT0990895 (A.H.) and Discovery Project DP120102019 (A.H. and X.H.), and the National Science Foundation through DMS-1206527 (S.Z.) and DMS-1346706 (H.H).

%%%%%%%%%%%%%%%%%%%%%%%%%%%%%%%%%%%%%%%%%%
%%%%%%%%%%%%%%%%%%%%%%%%%%%%%%%%%%%%%%%%%%

\section{Proofs of Theorem~\ref{thm:BFSS} and Proposition~\ref{prop}}\label{sec:classical}

\subsection{Proof of Theorem~\ref{thm:BFSS}}
We start by defining $w\in C^\infty(M)$ to be the harmonic function with boundary value $\phi$. Then, using Green's formula, we have
\begin{eqnarray*}
\langle\psi_j,\phi\rangle_{\pM}&=&\langle d_nu_j,w\rangle_{\pM}-\langle  u_j, d_nw\rangle_{\pM}\\
&=&\langle-\Delta u_j,w\rangle_{M}+\langle u_j,\Delta w\rangle_M\\
&=&-\lambda_j^2\langle u_j,w\rangle_{M}.
\end{eqnarray*}

Therefore, 
$$\begin{gathered}
\frac{\pi}{2}\sum_j\rho(\lambda-\lambda_j)\lambda_j^{-2}\langle\psi_j,\phi\rangle\psi_j(x) 
= -\frac{\pi}{2}\sum_j \rho(\lambda-\lambda_j)\langle u_j,w\rangle_{M}\psi_j(x) \\ = -\frac{\pi}{2}\rho\ast\left(\sum_j\langle u_j,w \rangle_{M}\psi_j(x)\delta_{\lambda_j}\right)(\lambda).
\end{gathered}$$

We shall interpret the sum as follows: we take both the positive and negative square roots, so each eigenvalue $E_j$ gives rise to two terms above, one a multiple of $\delta_{\lambda_j}$ and the other the same multiple of $\delta_{-\lambda_j}$. Let $t$ be the dual variable to $\lambda$, then the Fourier transform of the RHS of the above equation is 
$$-\pi\hat\rho(t)\sum_j\langle u_j,w\rangle_{M}\psi_j(x)\cos(t\lambda_j)=-\pi\hat\rho(t)d_nv(t, x),$$
where $v(t,x)$ is the solution to the wave equation 
\begin{equation}
\begin{cases}
(\partial_t^2+\Delta_D)v(t,x)=0 & \text{ in }\R\times M,\\
v(0,x)=w(x) & \text{ if }x\in M,\\
\partial_tv(0,x)=0. &
\end{cases}
\label{wave-eqn}
\end{equation}

To prove the theorem, it is sufficient to show that
$$d_nv(t,x)=-2\phi(x)\cdot\delta(t)+f$$
for some $f\in L^1_{loc}$ in $t$. The remainder of the proof is devoted to showing this. 

Equation (2.1) is less innocent than it appears: the function $w$ is nonzero at the boundary, and therefore is not in the domain of the Dirichlet Laplacian $\Delta_D$. Nevertheless it is an $L^2$ function, and we can apply the solution operator $\cos t\sqrt{\Delta_D}$ to it. 

\begin{example}[An one-dimensional model case] 
Consider the Dirichlet wave equation
$$\begin{cases}
\partial^2_tv(t,x)+\Delta_Dv(t,x)=0 & \text{ in }\R\times[0,1],\\
v(0,x)=w(x)=1_{[0,1]}, &\\
\partial_tv(0,x)=0, &
\end{cases}$$
where $1_{[0,1]}$ is the characteristic function of the interval $[0,1]$. The formula of d'Alembert yields
$$v(t,x)=1_{[|t|,1-|t|]}(x)$$
for $|t|<1/2$. Therefore, 
\begin{enumerate}[(i).]
\item for $x$ near $0$, 
$$\partial_x v(t, x)= \delta(t-x)+\delta(t+x),$$
showing that 
$$d_n(t, 0) =-\partial_x v(t, 0)= -2 \delta(t);$$

\item for $x$ near $1$, 
$$\partial_x v(t, x)=-\delta(1-t-x)-\delta(1+t-x),$$
showing that 
$$d_n(t,1) =\partial_x v(t,1)= -2 \delta(t).$$
\end{enumerate}
\end{example}

\

One can also derive a similar result in $\R^2$ and $\R^3$ using spherical means Poisson's and Kirchhoff's formulae \cite[Section 2.4.1]{E}.

In the case of a domain with smooth boundary, it is convenient to work with  Fermi coordinates $(r,y)$ near the boundary, where $r$ is distance to the boundary, and $y$ are local coordinates on the boundary extended to a tubular neighbourhood in such a way that $y$ is constant on lines normal to the boundary when $r$ is small. In these coordinates the metric takes the form
$$g=dr^2+h_{ij}(r,y)dy^idy^j,$$
where $h_{ij}(0,y)$ is the induced metric on the boundary $\{r=0\}$, and the summation convention is in force. The Riemannian measure
$$dg=k^2drdy,$$
where $k^4=\det h_{ij}$. Write $u = kv$; then $u$ solves the equation 
\begin{equation}
\begin{cases}
(\partial_t^2+P)u(t,r,y)=0 & \text{ in }\R\times M,\\
u(0,r,y)=k(r,y)w(r,y) & \text{ if }(r,y)\in M,\\
\partial_tu(0,r,y)=0, &
\end{cases}
\end{equation}
where
$$P= k \Delta k^{-1} = -(\partial_r^2+\partial_{y_i}h^{ij}\partial_{y_j}+f),$$
in which $h^{ij}=(h_{ij})^{-1}$ and
$$f=-k^{-1}\partial_r^2k-k^{-1}\partial_{y_i}(h^{ij}\partial_{y_j}k)$$
is a smooth function in $M$.

Now we write down an approximate solution to equation (2.2). Notice that since $w$ is harmonic, or equivalently, $P(kw) = 0$,  the solution to the equation should be static for times $|t| < r$. On the other hand, motivated by the one-dimensional example, we expect to have a conormal singularity propagating out from the boundary. This can also be motivated by the idea that the initial data can be thought of as having a jump of magnitude $\phi$ at the boundary in order to satisfy the boundary condition. This is a conormal singularity that can be expected to propagate normal to the boundary for nonzero time. Thus we specify an ansatz for time $|t| < \epsilon$, $\epsilon$ small, of the form 
\begin{equation}
u_N(t,r, y)= \begin{cases}
H(r-t)(k w)(r, y) + H(t-r) \sum_{j=0}^N (t-r)^j b_j(r, y), \quad t>0, \\
H(r+t) (kw)(r, y) + H(-t-r) \sum_{j=0}^N (-t-r)^j b_j(r, y), \quad t < 0, 
\end{cases} 
\label{ansatz}\end{equation}
where $H$ is the Heaviside function. (We write this formula in Fermi coordinates near the boundary; it should be interpreted as meaning that $u(t, z) = w(z)$ whenever the distance from $z$ to the boundary is bigger than $|t|$.) 

We apply the wave operator $(\partial_t^2 + P)$ to \eqref{ansatz}, and obtain for $t > 0$,
\begin{equation}\begin{gathered}
(\partial_t^2 +P) u_N(t, r, y) = 
- 2 \delta(r-t)\partial_r (kw)(r, y)  + 2 \delta(r-t) (\partial_rb_0)(r, y)  \\ + H(t-r) \left[\sum_{j=0}^N (t-r)^j Pb_j(r,y)  
+ \sum_{j=1}^N2j (t-r)^{j-1} \partial_r b_j(r,y)  \right]. 
\end{gathered}\end{equation}

We therefore choose the $b_j$ to satisfy 
\begin{equation}\begin{gathered}
(\partial_rb_0)(r, y) = \partial_r (kw)(r, y), \\
(\partial_r b_{j})(r, y) = -\frac1{2j} Pb_{j-1} , \quad j \geq 1.
\end{gathered}
\end{equation}

To satisfy the Dirichlet boundary condition, we specify that $b_j(0, y) = 0$. This allows us to solve uniquely for the $b_j$: 
\begin{equation}\begin{gathered}
b_0(r, y) = (kw)(r, y) - (kw)(0,y) , \\
b_{j}(r, y) = -\frac1{2j} \int_0^r P b_{j-1}(s, y) \, ds , \quad j \geq 1.
\end{gathered}\label{bjrecurrence}
\end{equation}

With this choice of $b_j$ we find that 
\begin{equation}\begin{gathered}
(\partial_t^2 +P) u_N(t, r, y) = e_N(t, r, y),  \\
e_N(t, r, y)=  \begin{cases} H(t-r) (t-r)^N Pb_N, \quad t > 0, \\
H(-t-r) (-t-r)^N Pb_N, \quad t < 0 .
\end{cases}
\end{gathered}\label{errorterm}\end{equation}

In particular, the RHS is a $C^{N-1}$ function of $t$ with values in $L^2(M)$. We can solve this error term using Duhamel's formula \cite[Section 2.4.2]{E}: 
\begin{equation}
u(t, r, y) = u_N(t, r, y) -u_N'(t, r, y), \quad  u_N'(t, r, y) = \int_0^t \frac{\sin (t-s) \sqrt{\Delta_D}}{\sqrt{\Delta_D}} e_N(s, r,y) \, ds . 
\label{correction}\end{equation}

Notice that $e_N(t)$ is even in $t$, and hence so is $u_N'(t, r, y)$. Therefore, $u(t, r, y)$ is also even in $t$. We need the following information about the correction term $u_N'(t, r, y)$:
\begin{lemma}\label{lem:correction}
The term $u_N'(t, r, y)$ obeys the Dirichlet boundary condition, and  $\partial_r u_N'(t, r, y) |_{r = 0}$ is $C^{K}$ in time with values in $C^{K'}(\pM)$, if $N$ is sufficiently large relative to $K + K'$. 
\end{lemma}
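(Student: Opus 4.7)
For the Dirichlet boundary condition, it suffices to observe that the propagator $\frac{\sin(t-s)\sqrt{\Delta_D}}{\sqrt{\Delta_D}}$, defined via the Dirichlet functional calculus, maps $L^2(M)$ into $\dom(\sqrt{\Delta_D}) = H^1_0(M)$, so each $u_N'(t,\cdot)$ appearing in \eqref{correction} has zero boundary trace on $\pM$. Equivalently, $u_N'$ is the unique solution of the inhomogeneous Dirichlet wave equation $(\partial_t^2 + \Delta_D)u_N' = e_N$ with zero Cauchy data, which disposes of the first claim.

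For the regularity statement, the starting observation---visible from \eqref{errorterm}---is that, thanks to the factors $(t \mp r)_+^N$ and the smoothness of $Pb_N$, the source $e_N$ lies in $C^{N-1}(\R \times M)$, vanishes identically for $|t| < r$, and in particular satisfies $\partial_t^j e_N|_{t=0} \equiv 0$ for all $j \le N-1$. The plan is to bootstrap regularity using the standard energy estimate
$$\|u(t)\|_{H^1(M)} + \|\partial_t u(t)\|_{L^2(M)} \le C\int_0^{|t|} \|F(s)\|_{L^2(M)}\,ds$$
for solutions of $(\partial_t^2 + \Delta_D)u = F$ with zero Cauchy data. Differentiating the equation repeatedly in $t$, the vanishing of $\partial_t^j e_N(0)$ propagates to $\partial_t^j u_N'(0) = 0$ for $j$ up to order $N$, so the estimate may be applied to each time derivative to obtain $\partial_t^k u_N' \in C^0([-\epsilon,\epsilon]; H^1_0(M))$ for $k \le N-1$. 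Trading time derivatives for spatial ones via $\Delta_D u_N' = -\partial_t^2 u_N' + e_N$ and elliptic regularity, one upgrades this to $u_N' \in C^\ell([-\epsilon,\epsilon]; H^m(M))$ for any $\ell + m \le N - C_n$. The Sobolev trace theorem then yields $\partial_r u_N'|_{r=0} \in C^K_t C^{K'}(\pM)$ as soon as $N$ exceeds $K + K' + C_n'$.

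The main technical hurdle is verifying that the elliptic regularity step is legitimate at each iteration: for $u_N'(t)$ to lie in $\dom(\Delta_D^m)$ one needs the compatibility conditions $\Delta_D^j u_N'|_{\pM} = 0$ for $j < m$. These are supplied precisely by the vanishing of $\partial_t^{2j} e_N$ at $t=0$, which translates via the equation into vanishing of $\Delta_D^j u_N'$ on $\pM$ at $t = 0$, and then for all time by propagation of the Dirichlet condition. Taking $N$ sufficiently large relative to $K + K'$ thus closes the bootstrap and establishes the lemma.
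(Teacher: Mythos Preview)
Your overall strategy is sound and close in spirit to the paper's, but the last paragraph introduces a spurious difficulty and then resolves it incorrectly. The elliptic regularity you need at each stage is simply: if $v\in H^1_0(M)$ and $\Delta v\in H^k(M)$, then $v\in H^{k+2}(M)$. Applying this to $v=\partial_t^k u_N'$, using $\Delta(\partial_t^k u_N')=-\partial_t^{k+2}u_N'+\partial_t^k e_N$ and the fact that $\partial_t^k u_N'\in H^1_0$ (since $u_N'(t)\in H^1_0$ for every $t$), the bootstrap closes with no further compatibility conditions. You never need $u_N'(t)\in\dom(\Delta_D^m)$, and in fact that membership is generally \emph{false}: on the boundary $\Delta u_N'|_{\pM}=(-\partial_t^2 u_N'+e_N)|_{\pM}=e_N|_{\pM}=|t|^N(Pb_N)(0,y)$, which does not vanish. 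So the ``propagation of the Dirichlet condition'' argument for $\Delta_D^j u_N'|_{\pM}=0$ is wrong, but fortunately also unnecessary. Drop that paragraph and the proof stands.

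For comparison, the paper avoids the bootstrap entirely by integrating by parts in the Duhamel integral: writing $\frac{\sin(t-s)\sqrt{\Delta_D}}{\sqrt{\Delta_D}}=-\Delta_D^{-1}\partial_s\cos(t-s)\sqrt{\Delta_D}$ and iterating yields an explicit formula
\[
u_N'=\sum_{j=1}^{k}(-1)^{j+1}\Delta_D^{-j}\Big(\frac{d}{dt}\Big)^{2(j-1)}e_N+(-1)^{k+1}\Delta_D^{-k}\int_0^t\cos(t-s)\sqrt{\Delta_D}\,\Big(\frac{d}{ds}\Big)^{2k}e_N\,ds,
\]
so the regularity is read off directly from the mapping property $\Delta_D^{-1}:H^k\to H^{k+2}\cap H^1_0$. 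Your energy-estimate route reaches the same conclusion with essentially the same loss of derivatives; the paper's version is just a bit more explicit and sidesteps any temptation to invoke $\dom(\Delta_D^m)$.
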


\begin{proof}[Proof of Lemma~\ref{lem:correction}] 
We integrate by parts in the integral above, exploiting the fact that $u$ is differentiable in time, to get 
\begin{equation}
u'_N(t, r, y) =  \Delta_D^{-1} \int_0^t \left[\cos (t-s) \sqrt{\Delta_D} \right] \frac{d}{ds} e_N(s,r,y) \, ds + \Delta_D^{-1} e_N(t,r,y). 
\label{correction2}\end{equation}

Since $\Delta_D^{-1}$ maps $L^2(\pM)$ into the domain of $\Delta_D$, we see that the correction term obeys the Dirichlet boundary condition for all $t$. Moreover, we can iterate this procedure, obtaining an expression of the form 
\begin{eqnarray}
u'_N(t, r, y)&=&\Delta_D^{-1} e_N(t,r,y) - \Delta_D^{-2} \left(\frac{d}{dt}\right)^2 e_N(t, r, y) + \dots + (-1)^{k+1} \Delta_D^{-k} \left(\frac{d}{dt}\right)^{2k} e_N(t, r, y)\nonumber\\
&& + (-1)^{k+1} \Delta_D^{-k} \int_0^t  \left[\cos (t-s) \sqrt{\Delta_D} \right] \left(\frac{d}{dt}\right)^{2k} e_N(s, r,y) \, ds.\label{correction3} 
\end{eqnarray}
We use the standard mapping property that $\Delta_D^{-1}$ maps $H^k(M)$ to $H^{k+2}(M) \cap H^1_0(M)$ continuously, for all $k \geq 0$. (See, e.g. \cite[Sections 8.2, 8.3]{GT}.) 

Also, it is clear from \eqref{errorterm} that $e_N$ is a $C^{N-1-k}$ function of $t$ with values in $H^{k}(\pM)$. It follows from these facts and \eqref{correction3} that $u_N'$ is a $C^{N-1-2m}$ function of $t$ with values in $H^{2m}(M) \cap H^1_0(M)$, for $2m \leq N-1$. In particular, it obeys the Dirichlet boundary condition for all $t$. Moreover, taking the $r$-derivative and restricting to $r=0$ maps $H^{2m}(M)$ to $H^{2m-3/2}(\pM)$, and then to $C^k(\pM)$ provided that $2m - 3/2 > k + (n-1)/2$ by Sobolev embedding. Hence, the restriction of $\partial_r  u_N'(t, r, y) |_{r = 0}$ to $\pM$ is a $C^K$ function of $t$ with values in $C^{K'}(\pM)$ provided that $N > K + K' + (n+4)/2$. 
\end{proof}

We need to justify that the function $u$ just constructed really satisfies the \emph{Dirichlet} wave evolution for $|t| < \epsilon$. We begin by observing that $u(t)$ is clearly continuous in $t$ with values in $L^2(M)$. Because of this, it suffices to check that $u(t)$ satisfies the Dirichlet wave evolution for $t < 0$ and for $t > 0$, or equivalently, for $-\epsilon \leq t \leq -\epsilon'$ and $\epsilon' \leq t \leq \epsilon$ for arbitrary positive $\epsilon' < \epsilon$. It suffices to exhibit $u$ as a limit of Dirichlet wave solutions which lie in the domain of $\Delta_D$ for each $t$. This is easily done by smoothing out the singularity in $u(t)$, $|t| \in [\epsilon', \epsilon]$, without changing $u$ in a neighbourhood of the boundary. (Finite propagation speed ensures that we can do this on a whole time interval disjoint from a neighbourhood of $t=0$.) Therefore, $u$ indeed satisfies the Dirichlet wave evolution. 

We are interested in the limit of the  Fourier transform of $\hat \rho(t) d_n v(t, 0, y) = \hat \rho(t) d_n (k^{-1} u)(t,0,y)$ as $\lambda \to \infty$. 
Due to Lemma~\ref{lem:correction}, the contribution of the correction term $u_N'$ is $O(\lambda^{-K})$ provided $N$ is sufficiently large. Hence we only need to consider the $u_N$ term. 

Bearing in mind that the normal derivative is \emph{minus} the $r$-derivative:
$$-\pi \hat \rho(t) d_n (k^{-1} u)(t, 0, y)=\pi \hat \rho(t) \partial_r (k^{-1} u)(t, 0, y),$$
the terms
$$\begin{aligned} (&k^{-1}u_0)(t,r,y) \\ =
&\begin{cases}
H(r-t)w(r,y)+H(t-r)[w(r,y)-w(0,y)]=w(r,y)+H(t-r)w(0,y), & t>0\\
H(r+t)w(r,y)+H(-t-r)[w(r,y)-w(0,y)]=w(r,y)+H(-t-r)w(0,y), & t<0
\end{cases}\end{aligned}$$ 
contribute
$$\pi \hat \rho(t) \Big[ \delta(r-t) w(0, y) + \delta(r+t) w(0,y) + 2 \partial_r w(r, y) \Big].$$

Evaluating at $r=0$ we get 
$$\pi \hat \rho(t) \Big[ 2\delta(t)  \phi(y) + 2\partial_r w(0, y) \Big].$$

The contribution of the $b_1$ term to $-\pi \hat \rho(t) d_n (k^{-1} u)(t, 0, y)$ is
$$\pi \hat \rho(t) \Big[ H(t-r) (r-t) k^{-1}(0,y)  \partial_r b_1(r, y) + H(-t-r) (r+t) k^{-1}(0,y)\partial_r b_1(r, y) \Big]$$
since $b_1 = 0$ when $r=0$. Applying \eqref{bjrecurrence} and evaluating at $r=0$ shows that this equals 
\begin{eqnarray*}
&&-\pi \hat \rho(t)  \frac{|t|}{2}   k^{-1}(0,y) P b_{0}(0,y)\\
&=& \pi \frac{|t|}{2} \hat \rho(t) k^{-1}(0,y) \Big[ \partial_r^2 (kw)(0, y)  \Big]  \\
&=& \pi \frac{|t|}{2} \hat \rho(t) \Big(  \partial_r^2w + 2 \frac{\partial_rk}{k}\partial_r w + \frac{\partial_r^2k}{k} w \Big)(0,y) \\
&=& \pi \frac{|t|}{2} \hat \rho(t)  \left[\Delta_{\pM} + \frac{\partial_r^2k}{k}(0,y)\right]\phi(y).
\end{eqnarray*}

It is easy to check that the contribution of the other terms is a bounded function of $t$ and $y$ that is $O(t^2)$ near $t=0$. Taking the inverse Fourier transform gives 
$$\frac{\pi}{2}\sum_j\rho(\lambda-\lambda_j)\lambda_j^{-2}\psi_j(y)\langle\psi_j,\phi\rangle=\phi(y) - \frac1{2} \lambda^{-2} \left[\Delta_{\pM} + \frac{\partial_r^2k}{k}(0,y)\right] \phi(y)  +  O(\lambda^{-3}),\quad\lambda\to\infty.$$

Next, we use the first and second variations of area formula on the $(n-1)$-dimensional submanifold $\pM=\{r=0\}$ (See, e.g. \cite[Sections 8 and 9]{S}) to derive
$$\frac{\partial_r^2k}{k}(0,y)=\frac14(n-1)^2H_y^2-\frac12Tr(\II_y^2),$$
where $H$ is the mean curvature, and $\II_y$ is the second fundamental form on $\pM$. From \cite[Section 4.4]{dC}, we have
$$Tr(\II_y^2)=(n-1)^2H_y^2-(n-1)(n-2)K_y,$$
therefore,
\begin{eqnarray*}
&&\frac{\pi}{2}\sum_j\rho(\lambda-\lambda_j)\lambda_j^{-2}\psi_j(y)\langle\psi_j,\phi\rangle\\
&=&\phi(y) - \frac1{2} \lambda^{-2} \left[\Delta_{\pM}-\frac14(n-1)^2H_y^2+\frac12(n-1)(n-2)K_y\right] \phi(y)  +  O(\lambda^{-3}),\quad\lambda\to\infty,
\end{eqnarray*}
showing \eqref{2ndterm}. In particular, Theorem~\ref{thm:BFSS} is proved. 

\begin{example}[Dirichlet eigenfunction expansion in the unit disc]
We investigate the above expansion in the unit disc $\{(r,\theta)\in\R^2:0\le r\le1,0\le\theta\le2\pi\}$, that is, $\Delta_\pM=-\partial_\theta^2$ and
\begin{equation}\label{disc}
\frac{\pi}{2}\sum_j\rho(\lambda-\lambda_j)\lambda_j^{-2}\psi_j(\theta)\langle\psi_j,\phi\rangle=\phi(\theta) - \frac1{2} \lambda^{-2} \left(-\partial_\theta^2-\frac14\right) \phi(\theta)  +  O(\lambda^{-3}),\quad\lambda\to\infty.
\end{equation}

However, we have the Dirichlet eigenfunctions in this case as
$$u_{k,l}(r,\theta)=c_{k,l}J_k(\lambda_{k,l}r)e^{ik\theta}.$$

Here, $J_k$ is the Bessel function of the first kind and order $k$, $\lambda_{k,l}$ is the $l$-th zero of $J_k$, and $c_{k,l}$ is the normalisation factor \cite[Section 2.63]{F}:
$$c_{k,l}=\frac{1}{\sqrt\pi J_{k+1}(\lambda_{k,l})}=\frac{1}{\sqrt\pi J_{k}'(\lambda_{k,l})},$$
from \cite[Section 9.5.4]{AS}. Therefore, 
$$\psi_{k,l}(\theta)=d_nu_{k,l}(r,\theta)=\partial_ru_{k,l}(r,\theta)=c_{k,l}\lambda_{k,l}J'_k(\lambda_{k,l})e^{ik\theta}=\frac{\lambda_{k,l}}{\sqrt\pi}\,e^{ik\theta},$$
and
\begin{eqnarray}
&&\frac{\pi}{2}\sum_{k,l}\rho(\lambda-\lambda_{k,l})\lambda_{k,l}^{-2}\psi_{k,l}(\theta)\langle\psi_{k,l},\phi\rangle\nonumber\\
&=&\pi\sum_{k,l}\rho(\lambda-\lambda_{k,l})\cdot\check\phi(k)e^{ik\theta}\nonumber\\
&=&\pi\sum_k\left[\sum_l\rho(\lambda-\lambda_{k,l})\right]\cdot\check\phi(k)e^{ik\theta}\label{poisson},
\end{eqnarray}
using Poisson's summation formula \cite[Section 7.2]{H1}:
$$\pi\sum_l\rho(\lambda-\lambda_{k,l})=1-\frac18\lambda^{-2}\left(4k^2-1\right)+O(\lambda^{-3}),\quad\lambda\to\infty.$$

Here, we used the fact from \cite[Section 9.5.12]{AS} that for fixed $k$ and $l\gg k$,
$$\lambda_{k,l}=\beta-\frac{4k^2-1}{8\beta}+O(\beta^{-3}),$$
where $\beta=(l+\frac12k-\frac14)\pi$. Thus,
\begin{eqnarray*}
\eqref{poisson}&=&\sum_k\check\phi(k)e^{ik\theta}-\frac12\lambda^{-2}\sum_k\left(k^2-\frac14\right)\check\phi(k)e^{ik\theta}+O(\lambda^{-3})\sum_k\check\phi(k)e^{ik\theta}\\
&=&\phi(\theta) - \frac1{2} \lambda^{-2} \left(-\partial_\theta^2-\frac14\right) \phi(\theta)  +  O(\lambda^{-3}),\quad\lambda\to\infty,
\end{eqnarray*}
and therefore we have recovered \eqref{disc}.

\end{example}

\

Using Theorem~\ref{thm:BFSS} together with Theorem~\ref{thm:BH}, we obtain 
\begin{corollary}\label{cor}
Let $\rho$ be as above. Then the operators
$$K^D_\lambda=\frac{\pi}{2}\sum_j\rho(\lambda-\lambda_j)\lambda_j^{-2}\psi_j\langle\psi_j,\cdot\rangle$$
converge strongly to the identity operator in $B(L^2(\pM))$. 
\end{corollary}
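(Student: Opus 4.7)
The plan is to combine the asymptotic pointwise identity of Theorem~\ref{thm:BFSS}, which gives convergence $K^D_\lambda\phi\to\phi$ on the dense subspace $C^\infty(\pM)\subset L^2(\pM)$, with a uniform operator-norm bound $\|K^D_\lambda\|_{L^2(\pM)\to L^2(\pM)}\le C$ obtained from Theorem~\ref{thm:BH}, and then conclude strong convergence by the standard density argument.

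First, I would establish the uniform bound. Partition the spectrum into unit windows $W_k=\{j:\lambda_j\in[\lambda+k,\lambda+k+1)\}$ for integers $k\ge -\lfloor\lambda\rfloor+1$, together with a finite piece containing the $O(1)$ eigenvalues near zero. On each window $W_k$, Theorem~\ref{thm:BH} gives
\[
\Bigl\|\sum_{j\in W_k}\psi_j\langle\psi_j,\cdot\rangle\Bigr\|_{L^2(\pM)\to L^2(\pM)}\le C(\lambda+k)^2,
\]
while for $j\in W_k$ the factor $\lambda_j^{-2}$ is comparable to $(\lambda+k)^{-2}$ and $|\rho(\lambda-\lambda_j)|\le C_M(1+|k|)^{-M}$ for any $M$ by the Schwartz decay of $\rho$. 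Hence the contribution of $W_k$ to $K^D_\lambda$ has operator norm $\le C_M(1+|k|)^{-M}$, and summing over $k$ gives $\|K^D_\lambda\|_{L^2\to L^2}\le C$ independent of $\lambda\ge 1$. The contribution of the finitely many bounded eigenvalues is a fixed finite-rank operator multiplied by $\rho(\lambda-\lambda_j)\to 0$, hence negligible.

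Second, Theorem~\ref{thm:BFSS} (in the refined form \eqref{2ndterm}) gives, for each $\phi\in C^\infty(\pM)$, an estimate $|K^D_\lambda\phi(y)-\phi(y)|=O(\lambda^{-2})$ uniformly in $y\in\pM$. Since $\pM$ is compact, uniform convergence implies $L^2$-convergence, so $K^D_\lambda\phi\to\phi$ in $L^2(\pM)$ for every $\phi\in C^\infty(\pM)$. Combined with the uniform norm bound on $K^D_\lambda-\Id$ from the previous step, a standard $3\varepsilon$ argument extends strong convergence from the dense subspace $C^\infty(\pM)$ to all of $L^2(\pM)$.

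The main obstacle is the first step: Theorem~\ref{thm:BH} only bounds the spectral projector in a bounded window centred at $\lambda$, so to control the full sum defining $K^D_\lambda$ one needs to cut the spectrum into windows and exploit the Schwartz decay of $\rho$ in combination with the factor $\lambda_j^{-2}$ to offset the growth $C(\lambda+k)^2$ from Theorem~\ref{thm:BH}. Once this uniform bound is in hand, the rest is routine functional analysis.
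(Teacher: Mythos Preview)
Your proposal is correct and follows essentially the same approach as the paper: use Theorem~\ref{thm:BH} to obtain a uniform operator-norm bound on $K^D_\lambda$, use Theorem~\ref{thm:BFSS} for convergence on the dense subspace $C^\infty(\pM)$, and conclude by density. The paper's proof is a two-line sketch that simply asserts the uniform bound follows from Theorem~\ref{thm:BH}; your window decomposition (splitting the spectrum into unit intervals and using the Schwartz decay of $\rho$ together with the positivity of the rank-one operators $\psi_j\langle\psi_j,\cdot\rangle$) is exactly the standard way to unpack that assertion, so you have supplied the detail the paper omits rather than taken a different route.
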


\begin{proof}
Theorem~\ref{thm:BH} shows that the operators $K^D_\lambda$ are uniformly bounded as $\lambda\to\infty$. Therefore, it is only necessary to show that $K^D_\lambda\phi\to\phi$ in $L^2(\pM)$ for a dense subset. This is shown above for $\phi\in C^\infty(\pM)$, so we are done. 
\end{proof}

\subsection{Proof of Proposition~\ref{prop}}
Since this runs parallel to the proof of Theorem~\ref{thm:BFSS}, we provide only a sketch. 

Let $\phi \in C^\infty(\pM)$, without loss of generality we may assume that  
$$\int_{\pM} \phi(y) d\sigma(y) = 0,$$
otherwise we only need to replace $\phi$ by $\phi-\frac{1}{\vol(\pM)}\int_\pM\phi$. We define $w$ to be the unique harmonic function that is orthogonal to constants and such that $d_n w = \phi$ at $\pM$. 
We then have
$$\langle\omega_j,\phi\rangle_{\pM}=\langle v_j,d_nw\rangle_{\pM}-\langle d_nv_j,w\rangle_{\pM}=\langle v_j,-\Delta w\rangle_M-\langle-\Delta v_j,w\rangle_{M}=\langle\Delta v_j,w\rangle_{M}$$

Therefore, 
$$\begin{gathered}
\frac{\pi}{2}\sum_j\rho(\lambda-\lambda_j)\langle \omega_j ,\phi\rangle\omega_j(x) 
= \frac{\pi}{2}\sum_j \rho(\lambda-\lambda_j)\langle \Delta v_j,w\rangle_{M}\omega_j(x) \\ 
=\frac{\pi}{2}\rho\ast\left(\sum_j\langle \Delta v_j,w \rangle_{M}\omega_j(x)\delta_{\lambda_j}\right)(\lambda).
\end{gathered}$$

The Fourier transform of the RHS of the above equation is 
$$\pi\hat\rho(t)\sum_j\langle \Delta v_j,w\rangle_{M}\omega_j(x)\cos(t\lambda_j)=\pi\hat\rho(t) \Delta v(t, x) \Big|_{\pM},$$
where $v(t,x)$ is the solution to the wave equation 
\begin{equation}
\begin{cases}
(\partial_t^2+\Delta_N)v(t,x)=0 & \text{ in }\R\times M,\\
v(0,x)=w(x) & \text{ if }x\in M,\\
\partial_tv(0,x)=0. &
\end{cases}
\label{wave-eqn-N}\end{equation}

We change, as above, to $u = kv$; then $u$ solves the equation 
$$
\begin{cases}
(\partial_t^2+P)u(t,r,y)=0 & \text{ in }\R\times M,\\
u(0,r,y)=k(r,y)w(r,y) & \text{ if }(r,y)\in M,\\
\partial_tu(0,r,y)=0. &
\end{cases}
$$

As before, we have an initial condition that doesn't satisfy the boundary condition. We write down an ansatz for the solution. One difference is that, for the Neumann Laplacian, we expect the leading singularity in the solution to be a jump in the derivative of the function, rather than a jump in the solution itself. Therefore, our ansatz takes the form 
\begin{equation}
u_N(t,r, y)= \begin{cases}
k(r, y) w(r, y) + H(t-r) \sum_{j=1}^N (t-r)^j b_j(r, y), \quad t > 0, \\
k(r,y) w(r, y) + H(-t-r) \sum_{j=1}^N (-t-r)^j b_j(r, y), \quad t < 0 , 
\end{cases} 
\label{ansatzN}\end{equation}
with the sum starting from $j=1$ rather than $j=0$ in the Dirichlet case. 
This gives rise to equations of the form 
$$\begin{gathered}
\partial_r b_1 = 0, \\
\partial_r b_j = - \frac1{2j} P b_{j-1}, \quad j \geq 2.
\end{gathered}$$

Imposing the Neumann boundary condition on $v = k^{-1} u$  gives 
$$\begin{gathered}
k^{-1} b_1(0, y) =  \partial_r w(0, y) ,\\
k^{-1} b_j(0, y) =  \frac{1}{j} \partial_r (k^{-1}b_{j-1})(0, y) \implies b_j(0,y) = \frac{1}{j}  \left( \partial_r b_{j-1} - \frac{\partial_rk}{k} b_{j-1} \right)(0,y) , \quad j \geq 2.
\end{gathered}$$

This gives a unique solution for these functions: 
\begin{equation}\begin{gathered}
b_1(r, y) = k(0,y)(\partial_r w)(0, y)  , \\
b_{j}(r, y) = \frac{1}{j}  \left( \partial_r b_{j-1} - \frac{\partial_rk}{k} b_{j-1} \right)(0,y) - 
\frac{1} {2j}  \int_0^r P b_{j-1}(s, y) \, ds , \quad j \geq 2.
\end{gathered}\label{bjrecurrenceN}
\end{equation}

Now we compute $\pi\hat\rho(t)\Delta v=\pi\hat\rho(t)k^{-1}(Pu)$,
\begin{eqnarray*}
&&\pi\hat\rho(t)k^{-1}(r,y)P(u_N)(t,r,y)\\
&=&\pi\hat\rho(t)k^{-1}(r,y)\big[-\delta(t-r)b_1(r,y) - \delta(-t-r)b_1(r,y)\big]\\
&&-\pi\hat\rho(t)k^{-1}(r,y)H(t-r)\Big[\sum_{j=2}^Nj(j-1)(t-r)^{j-2}b_j(r,y)\Big]\\
&&-\pi\hat\rho(t)k^{-1}(r,y)H(-t-r)\Big[\sum_{j=2}^Nj(j-1)(-t-r)^{j-2}b_j(r,y)\Big].
\end{eqnarray*}

Taking $N=3$ and evaluating at $r=0$, the above equation equals
\begin{eqnarray*}
&&\pi\hat\rho(t)\big[-\delta(t)\partial_rw(0,y)-\delta(-t)\partial_rw(0,y)\big]-2\pi\hat\rho(t)k^{-1}(0,y)b_2(0,y)-6\pi\hat\rho(t)k^{-1}(0,y)|t|b_3(0,y)\\
&=&2\pi\hat\rho(t)\delta(t)\phi(y)-\pi\hat\rho(t)k^{-1}(0,y)\partial_rk(0,y)\phi(y)-\frac{\pi}{2}\hat \rho(t)|t|\left[\Delta_{\pM} + \frac{k\partial_r^2k-2(\partial_rk)^2}{k^2}(0,y)\right]\phi(y),
\end{eqnarray*}
noticing that $\partial_rw(0,y)=-d_nw(0,y)=-\phi(y)$. Taking the inverse Fourier transform gives 
$$\frac{\pi}{2}\sum_j\rho(\lambda-\lambda_j)\omega_j(y)\langle\omega_j,\phi\rangle=\phi(y)+\frac1{2} \lambda^{-2}\left[\Delta_{\pM}+\frac{k\partial_r^2k-2(\partial_rk)^2}{k^2}(0,y)\right]\phi(y)  +  O(\lambda^{-3}),\quad\lambda\to\infty.$$

Similarly as in \S2.1, we have
$$\frac{k\partial_r^2k-2(\partial_rk)^2}{k^2}(0,y)=-\frac34(n-1)^2H_y^2+\frac12(n-1)(n-2)K_y.$$

\begin{example}[Neumann eigenfunction expansion in the unit disc]
We also investigate the above expansion in the unit disc as in \S 2.1,
\begin{equation}\label{discN}
\frac{\pi}{2}\sum_j\rho(\lambda-\lambda_j)\omega_j(\theta)\langle\omega_j,\phi\rangle=\phi(\theta)+\frac1{2} \lambda^{-2} \left(-\partial_\theta^2-\frac34\right) \phi(\theta)  +  O(\lambda^{-3}),\quad\lambda\to\infty.
\end{equation}

However, we have the Neumann eigenfunctions in this case as
$$v_{k,l}(r,\theta)=c_{k,l}J_k(\lambda_{k,l}'r)e^{ik\theta},$$
where $\lambda_{k,l}'$ is the $l$-th zero of $J_k'$, and $c_{k,l}$ is the normalisation factor:
$$c_{k,l}=\frac{\lambda_{k,l}'}{J_k(\lambda_{k,l}')\sqrt{\pi(\lambda_{k,l}'^2-k^2)}},$$
from \cite[Section 11.4.2]{AS}. Therefore, 
$$\omega_{k,l}(\theta)=v_{k,l}(1,\theta)=\frac{\lambda_{k,l}'}{\sqrt{\pi(\lambda_{k,l}'^2-k^2)}}e^{ik\theta},$$
and
\begin{equation}\label{poissonN}
\frac{\pi}{2}\sum_{k,l}\rho(\lambda-\lambda_{k,l}')\omega_{k,l}(\theta)\langle\omega_{k,l},\phi\rangle
=\pi\sum_k\left[\sum_l\frac{\lambda_{k,l}'^2}{\lambda_{k,l}'^2-k^2}\cdot\rho(\lambda-\lambda_{k,l}')\right]\cdot\check\phi(k)e^{ik\theta},
\end{equation}
a similar computation as in \S2.1 shows that
$$\pi\sum_l\rho(\lambda-\lambda_{k,l}')=1-\frac18\lambda^{-2}\left(4k^2+3\right)+O(\lambda^{-3}),\quad\lambda\to\infty.$$

Here, we used the fact from \cite[Section 9.5.13]{AS} that for fixed $k$ and $l\gg k$,
$$\lambda_{k,l}'=\beta-\frac{4k^2+3}{8\beta}+O(\beta^{-3}),$$
where $\beta=(l+\frac12k-\frac34)\pi$. Thus,
\begin{eqnarray*}
&&\pi\sum_l\frac{\lambda_{k,l}'^2}{\lambda_{k,l}'^2-k^2}\cdot\rho(\lambda-\lambda_{k,l}')\\
&=&\pi\sum_l\rho(\lambda-\lambda_{k,l}')+\pi\sum_l\frac{k^2}{\lambda_{k,l}'^2-k^2}\cdot\rho(\lambda-\lambda_{k,l}')\\
&=&\pi\sum_l\rho(\lambda-\lambda_{k,l}')+\pi\frac{k^2}{\lambda^2}\sum_l\rho(\lambda-\lambda_{k,l}')+\pi k^2\sum_l\frac{\lambda_{k,l}'^2-\lambda^2}{\lambda^2(\lambda_{k,l}'^2-k^2)}\cdot\rho(\lambda-\lambda_{k,l}')\\
&=&1+\frac18\lambda^{-2}\left(4k^2-3\right)+O(\lambda^{-3}),\quad\lambda\to\infty,
\end{eqnarray*}
and therefore we have recovered \eqref{discN} if we plug this into \eqref{poissonN}.

\end{example}

\begin{remark}
Corollary~\ref{cor} does \emph{not} hold for $K_\lambda^N$. In fact, the uniform boundedness principle implies that if the $K_\lambda^N$ converge strongly, then they are uniformly bounded in operator norm. But this is not true on the unit disc for example, where the norm of the $\omega_j$ can be as large as $c \lambda_j^{1/3}$. 
\end{remark}

%%%%%%%%%%%%%%%%%%%%%%%%%%%%%%%%%%%%%%%%%%
%%%%%%%%%%%%%%%%%%%%%%%%%%%%%%%%%%%%%%%%%%

\section{Proof of Theorem~\ref{thm:semiclassical}}\label{sec:semiclassical}

\subsection{Boundary traces of wave kernels}
In this section, we prove Theorem~\ref{thm:semiclassical}. We continue to use Fermi coordinates $x=(r,y)$ near $\pM$, with dual coordinates $\xi=(\xi_n,\eta)$. We let $\tilde g=h_{ij}(0,y)$ denote the induced metric on the boundary, as in \S 2.1.

We let $R_y$ and $R_{y'}$ denote restriction operators to $\pM$ the left, resp. right, factor.   Consider the operators $K^D_\lambda$ and $K^N_\lambda$ defined in the introduction, taking the Fourier transform gives us the operators 
$$\pi\hat \rho(t) R_yR_{y'}d_{n_y}d_{n_{y'}} \frac{\cos(t\sqrt{\Delta_D})}{\Delta_D}$$
in the Dirichlet case, and 
$$\pi\hat \rho(t) R_yR_{y'} \cos(t\sqrt{\Delta_N})$$
in the Neumann case. In \cite{HZ} the operators 
$$R_yR_{y'}d_{n_y}d_{n_{y'}} \frac{\sin(t\sqrt{\Delta_D})}{\sqrt{\Delta_D}}\text{ and }
R_yR_{y'} \frac{\sin(t\sqrt{\Delta_N})}{\sqrt{\Delta_N}}$$
were analyzed in both the Dirichlet and Neumann case, and it is straightforward to 
adapt their results to obtain the following lemma. For brevity, we will call either of the operators above the ``boundary trace of the wave kernel''. 

\begin{lemma}\label{HZ-result}
Suppose that $\hat \rho$ is supported in $[-\epsilon, \epsilon]$ and equal to $1$ in a neighbourhood of $0$. Let $\chi(y, D_t, D_y)$ be a pseudodifferential operator on $\RR \times \pM$ with symbol of the form 
\begin{equation}
\chi(y, \tau, \eta) = \zeta( |\eta|_{\tilde g}^2/\tau^2) (1 - \phi(\eta, \tau)), 
\label{zeta}\end{equation}
where $\zeta(s)$ is supported where $s \leq 1 - \delta$ for some positive $\delta$, and $\phi \in C_c^\infty(\RR^n)$ is equal to 1 near the origin. Then, for sufficiently small $\epsilon$ (depending on $\delta$), 
\begin{enumerate}[(i).]
\item the kernels of $$\hat \rho(t) \chi(y, D_t, D_y) \circ R_yR_{y'}d_{n_y}d_{n_{y'}} \frac{\cos(t\sqrt{\Delta_D})}{\Delta_D}, \quad\hat \rho(t)  R_yR_{y'}d_{n_y}d_{n_{y'}} \frac{\cos(t\sqrt{\Delta_D})}{\Delta_D} \circ \chi(y, D_t, D_y)$$
are  distributions conormal to $\{ y = y' , t = 0 \}$ with principal symbol 
\begin{equation}
2\chi(y, \tau, \eta) \left( 1 - \frac{|\eta|_{\tilde g}^2}{\tau^2} \right)^{\frac12};
\label{wavesymbolD}\end{equation}

\item the kernels of 
$$\hat \rho(t) \chi(y, D_t, D_y) \circ R_yR_{y'}\cos(t\sqrt{\Delta_N}), \quad\hat \rho(t) R_yR_{y'}\cos(t\sqrt{\Delta_N}) \circ \chi(y, D_t, D_y)$$ 
are distributions conormal to $\{ y = y' , t = 0 \}$ with principal symbol 
\begin{equation}
2\chi(y, \tau, \eta) \left( 1 - \frac{|\eta|_{\tilde g}^2}{\tau^2} \right)^{-\frac12} . 
\label{wavesymbolN}\end{equation}
\end{enumerate}
\end{lemma}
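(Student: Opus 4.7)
The plan is to reduce Lemma \ref{HZ-result} to the parametrix analysis in \cite{HZ}. Using the identity
\begin{equation*}
\cos(t\sqrt{\Delta_B}) = \partial_t \Big( \frac{\sin(t\sqrt{\Delta_B})}{\sqrt{\Delta_B}} \Big), \qquad B \in \{D, N\},
\end{equation*}
the Neumann kernel of Lemma \ref{HZ-result} is obtained from the HZ Neumann sine kernel by a single $\partial_t$, while the Dirichlet kernel is obtained from the HZ Dirichlet sine kernel by $\partial_t \circ \Delta_D^{-1}$. Both operations commute with $R_y$, $R_{y'}$ and with the normal derivatives, so after running them through, both the conormality statement and the principal symbol can be read off directly from the corresponding assertions in \cite{HZ}, up to bookkeeping of powers of $\tau$.

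The core content of \cite{HZ} that we invoke is the construction of a geometric-optics parametrix for the boundary trace of the wave kernel, valid on the microlocal region selected by $\chi$. On $\supp \chi$ one has $|\eta|_{\tilde g}/|\tau| \le 1 - \delta$, bounded away from the glancing value $1$, so bicharacteristics of the wave operator issuing from $\pM$ in such a direction enter the interior transversely and no Melrose--Taylor glancing correction is needed. Taking $\epsilon$ small enough (in terms of $\delta$ and the geometry of $M$) ensures that no reflected bicharacteristic returns to the boundary in time $|t| \le \epsilon$, so that $\hat\rho(t)$ times the boundary wave trace is a Lagrangian distribution on $\RR \times \pM \times \pM$ conormal to $\{t = 0,\ y = y'\}$.

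To read off the principal symbol one pushes the Lagrangian measure on the characteristic cone $\{\tau^2 = |\xi|^2\}$ in $T^*(\RR \times M)$ forward to $T^*(\RR \times \pM)$ via boundary restriction, which introduces a Jacobian $|\tau|/|\xi_n|$; summing the two characteristic branches $\xi_n = \pm(\tau^2 - |\eta|_{\tilde g}^2)^{1/2}$ yields the factor $2(1 - |\eta|_{\tilde g}^2/\tau^2)^{-1/2}$, matching \eqref{wavesymbolN} in the Neumann case. In the Dirichlet case the two normal derivatives contribute $\xi_n^2 = \tau^2 - |\eta|_{\tilde g}^2$ at the principal symbol level, while $\Delta_D^{-1}$ contributes $\tau^{-2}$ on the characteristic variety; combining with the boundary push-forward gives $2(1 - |\eta|_{\tilde g}^2/\tau^2)^{1/2}$, matching \eqref{wavesymbolD}. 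Composition with $\chi(y, D_t, D_y)$ on either side multiplies the principal symbol by $\chi(y, \tau, \eta)$, since $\chi$ is a pseudodifferential operator of order zero on $\RR \times \pM$ whose microsupport lies inside the region of validity of the parametrix.

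The main technical point, and the one I expect to be the chief obstacle, is justifying the action of $\Delta_D^{-1}$ on the microlocalized Dirichlet kernel. The factor $1 - \phi$ in \eqref{zeta} ensures that the microsupport of $\chi$ stays away from the zero section of $T^*(\RR \times \pM)$, and in particular from $\tau = 0$. On this region $\Delta_D^{-1}$ acts essentially as a classical pseudodifferential operator of order $-2$, multiplying the principal symbol by $\tau^{-2}$; care is required, however, to propagate this through the conormal singularity at $\{t=0,\ y=y'\}$ and verify that the smoothing errors in the HZ parametrix remain smoothing after this division. Once this is in place, the remainder is a direct symbol computation.
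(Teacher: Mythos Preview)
Your overall strategy---reduce to the boundary sine kernels of \cite{HZ} via the identity $\cos(t\sqrt{\Delta_B}) = \partial_t\big(\sin(t\sqrt{\Delta_B})/\sqrt{\Delta_B}\big)$---is exactly the route the paper takes, and your discussion of why the one-sided cutoff $\chi$ plus small $\epsilon$ suffices (transversal bicharacteristics do not return to $\pM$ in time $\le \epsilon$) matches the paper's argument. For the Neumann case your reduction is identical to theirs: differentiate the HZ Neumann sine kernel in $t$, which multiplies the symbol by $i\tau$.

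In the Dirichlet case, however, the paper avoids your ``chief obstacle'' entirely. Rather than writing the Dirichlet kernel as $\partial_t\circ\Delta_D^{-1}$ of the HZ sine kernel, they observe the reverse relation: the HZ sine kernel is (minus) the $t$-derivative of the kernel in (i). Since both are conormal to $\{t=0,\,y=y'\}$ (this much follows from \cite{HZ}) and $\partial_t$ multiplies the symbol by $i\tau$, the symbol of (i) is determined up to a $t$-independent error; but a $t$-constant distribution with no wavefront set for $t\neq 0$ is smooth. This antidifferentiation argument stays entirely on $\RR\times\pM$ and never needs to make sense of $\Delta_D^{-1}$ acting through boundary restriction---your claim that $\Delta_D^{-1}$ ``commutes with $R_y$, $R_{y'}$'' is false as stated, and making your route rigorous would require re-running the HZ parametrix for $\sin(t\sqrt{\Delta_D})/\Delta_D^{3/2}$ rather than quoting the sine-kernel symbol directly.

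There is also a gap in your symbol computation. The factor of $2$ in \eqref{wavesymbolD}--\eqref{wavesymbolN} does \emph{not} come only from ``summing the two characteristic branches $\xi_n = \pm(\tau^2-|\eta|^2)^{1/2}$''---that fold accounts for the interior-hypersurface symbol, which carries no factor of $2$ (compare Section~\ref{H}). At the boundary one must also include the $j=\pm 1$ reflection terms in the billiard canonical relation; for Dirichlet these carry the sign $(-1)^j$ and, combined with the sign flips from the two normal derivatives, add constructively to the $j=0$ term. The paper carries out this bookkeeping explicitly (in the remark following the proof) to pin down the constant $-2i\tau$ in the sine-kernel symbol. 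Your pushforward heuristic, as written, would give the same answer on an interior hypersurface as on $\pM$, which is off by a factor of $2$.
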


\begin{proof}[Proof of Lemma~\ref{HZ-result}]
The proof is essentially contained in \cite[Proposition 4]{HZ} (see Remark~\ref{manifolds}), so we only provide brief remarks here
about the minor differences between what is claimed in Lemma~\ref{HZ-result} and the results of \cite{HZ}. 

We first explain why $\epsilon$ has to be sufficiently small. It is well known that the boundary trace of the wave kernel has wavefront set contained in the set 
$$\begin{gathered}
\big\{ (t, \tau, y, \eta, y', -\eta') \mid \tau \neq 0, \  \text{ there exists a generalized bicharacteristic } \gamma \text{ in } T^*(\overline M)\\
\text{ of length } t \text{ such that } \gamma(0) \in T^*_y(\overline M), \ \gamma(t) \in T^*_{y'}(\overline M), \ 
\pi(\gamma(0)) = (y, \eta/\tau), \ \pi(\gamma(t)) = (y', \eta'/\tau) \big\}.
\end{gathered}$$

Here $\pi$ is the projection from $T^*_y(\overline M)$ to $T^*_y(\pM)$. 
Now suppose we use a cutoff function $\chi$ on the right of the boundary trace of the wave kernel. Then this removes all wavefront set with $|\eta'/\tau|_{y'} \geq 1 - \delta$. In particular, it removes all covectors generating bicharacteristics (geodesics) that are nearly tangent to the boundary. Since the boundary of $M$ is compact and smooth by hypothesis, this means that there is a positive time $\epsilon$, uniform over $y' \in \pM$, such that no bicharacteristic with initial condition $\eta'/\tau$ with length $\leq 1 - \delta$ reaches the boundary in time $\leq \epsilon$. It follows that composing with $\hat \rho(t) \chi$ on the right removes all wavefront set except that at $t=0$. 
But at $t=0$, points in the wavefront satisfy $y = y'$ and $\eta = -\eta'$, so this removes all the nearly tangential points in the left variables $(y, \eta)$ as well. This means that it is unnecessary to have a cutoff pseudo on the left. Similarly, if we have a cutoff pseudo on the left, and $\epsilon$ is chosen as above relative to $\delta$, then we do not need a cutoff pseudo on the right. 

It follows from \cite[Proposition 4]{HZ} that the kernels in (i) and (ii) in the lemma are conormal to $\{ y = y', t = 0 \}$. The precise symbols that we want in the lemma are not calculated, but it is straightforward to deduce \eqref{wavesymbolD} and \eqref{wavesymbolN} from \cite{HZ}. In the Dirichlet case, the symbol of the operator 
$$\hat \rho(t) \chi(y, D_t, D_y) \circ R_yR_{y'}d_{n_y}d_{n_{y'}} \frac{\sin(t\sqrt{\Delta_D})}{\sqrt{\Delta_D}}$$
was computed to be
$$ C\tau \chi(y, \tau, \eta) \left( 1 - \frac{|\eta|_{\tilde g}^2}{\tau^2} \right)^{\frac12},$$
but the constant $C$ was not calculated explicitly. However one can compute (see the remark following this proof) that the correct constant is $C=-2i$ and hence the symbol is
\begin{equation}
-2 i\tau \chi(y, \tau, \eta) \left( 1 - \frac{|\eta|_{\tilde g}^2}{\tau^2} \right)^{\frac12}.
\label{D-symbol}\end{equation} 

Notice that this operator is the $t$-derivative of the operator in part (i) of the lemma, up to smoothing terms (when the derivative hits the $\hat \rho$ factor, the result is a smoothing operator). We also notice that applying a $t$-derivative to a distribution conormal to $t=0, y = y'$ brings down a factor $i\tau$  matching the symbol of the operator in \cite{HZ}. It follows by the fundamental theorem of calculus that the difference between the kernel in (i) and \eqref{wavesymbolD} is constant in time. But 
due to the absence of wavefront set for $t \neq 0$, the difference has no wavefront set, i.e.~ is a smooth kernel, which is (in a trivial sense) also conormal to 
$y = y', t = 0$ (of order $-\infty$). This proves \eqref{wavesymbolD}.

In the Neumann case, the symbol of the operator 
$$
\hat \rho(t) \chi(y, D_t, D_y) \circ R_yR_{y'}\frac{\sin(t\sqrt{\Delta_N})}{\sqrt{\Delta_N}}
$$
can be computed similarly; we obtain 
\begin{equation}
2(i\tau)^{-1} \chi(y, \tau, \eta) \left( 1 - \frac{|\eta|_{\tilde g}^2}{\tau^2} \right)^{-\frac12}.\label{N-symbol}\end{equation}

By differentiating in $t$, we obtain the kernel in (ii) above, and this brings down a factor of $i\tau$ to give the principal symbol claimed in the lemma. 
\end{proof}

\begin{remark}
Let us give a sketch of the calculation of the constant in (3.4). We know that microlocally away from the tangential directions, the operator $\frac{\sin(t\sqrt{\Delta_D})}{\sqrt{\Delta_D}}$ is a Fourier integral operator and its wavefront relation is given by
$$\bigcup_{j\in \mathbb Z} W^j_{\pm}.$$ 
Here
$$W^j_{\pm} =\{(t,\tau, x, \xi, x', -\xi') \in T^* (\mathbb R \times M \times M) |\;\; \Phi^t(x',\xi')=(x,\xi), \tau=\pm|\xi|\;\text{and \textit{property}}\;j\; \text{is satisfied} \},$$ where `\textit{property} $j$' means, for $j > 0$, that $t>0$ and on the interval $[0,t]$, the orbit $\Phi^s(x',\xi')$ of the billiard flow reflects at $\partial M$ exactly $j$ times; similarly, for 
$j < 0$, that $t<0$ and on the interval $[t,0]$, the orbit $\Phi^s(x',\xi')$ of the billiard flow reflects at $\partial M$ exactly $|j|$ times. The relation with $j=0$ is just the diagonal relation, with $t = 0$, $x=x', \xi = \xi'$. In  \cite{HZ}, $\Gamma^j_{\pm}$ is used to denote the corresponding canonical relation i.e. $(W^j_\pm)'$. 

It is known that the symbol of $\frac{\sin(t\sqrt{\Delta_D})}{\sqrt{\Delta_D}}$ on $W_\pm^j$ is
$ \frac{(-1)^j}{2i\tau} \sigma,$ where $\sigma=|dt \wedge dx \wedge d\xi|^{1/2}$ is the canonical graph half-density (see \cite{HZ}). We would like to take normal derivatives, restrict to $\partial M$, and compute the symbol of the composition. To do this we use Fermi normal coordinates $(y,r)$ along $\partial M$, that is, $x=\text{exp}_y(r \nu_y)$ where $\nu_y$ is the interior unit normal at $y$ . Let $\xi = (\eta, \xi_n) \in T_{(y,r)}^*\R^n$ denote the corresponding symplectically
dual fiber coordinates. Taking normal derivatives in $r$ and $r'$ directions simply multiplies the symbol $\frac{(-1)^j}{2i\tau} \sigma$ by $i\xi_n$ and $-i\xi_n'$. Before we restrict our symbol to $T^* (\R) \times T^*(\partial M \times \partial M)$, we first restrict $W_\pm^j$ to $T^* (\mathbb R) \times T_{\partial M \times \partial M}^*(M \times M)$.  One can see that because we are away from the tangential directions and because $t$ is small, after this restriction we get a singularity only at $t=0$ and only when $j=0, 1$ and $-1$. In fact in the support of $\hat \rho (t) \chi$, the restriction of $W_\pm^0$ is  
$$A_{\pm}=\{(0, \tau, y, \xi, y', -\xi') | \;y=y', \xi=\xi', \tau=\pm |\xi|\} ,$$
the restriction of $W_\pm^1$ is
$$B^1_\pm=\{(0, \tau, y, \xi, y', -\xi') |\; y=y', \bar \xi=\xi', \tau=\pm |\xi|, \xi_n>0\} ,$$
and the restriction of $W_\pm^{-1}$ is
$$B_\pm^{-1}=\{(0, \tau, y, \xi, y', -\xi') | \;y=y', \bar \xi=\xi', \tau=\pm |\xi|, \xi_n<0\} ,$$
where $\bar\xi =(\eta, -\xi_n)$. For $|j|>1$, the restriction of $W^j_\pm$ is the empty set.  We note that the images of $A_\pm$ and $B_\pm^{\pm1}$ under the projection map 
$\pi: T^* (\mathbb R) \times T_{\partial M \times \partial M}^*(M \times M) \to T^* (\R) \times T^*(\partial M \times \partial M)$ are identical. However $\pi$ is a fold map on $A_\pm$ and injective on $B_\pm^{\pm 1}$.  

Using \cite[Equation 31]{HZ}, the restriction to 
$$\pi(A_\pm)=\pi(B^1_\pm)=\pi(B_\pm^{-1})=\{(0, \tau, y, \eta, y', -\eta') \in T^*(\R \times \partial M \times \partial M) |\;\; y=y', \eta=\eta'\}$$ 
of the half-density $\sigma= |dt \wedge dx \wedge d\xi|^{1/2}$  is given by
$$ \Big(1-\frac{|\eta|^2}{\tau^2}\Big)^{-1/2}|d\tau \wedge dy \wedge d\eta|^{1/2}.$$ 

This is basically because $\tau=\pm \sqrt{|\eta|^2+\xi_n^2}$ and therefore 
$d\xi=d\eta \wedge d \xi_n=\pm\frac{\tau}{\xi_n} d\eta \wedge d \tau$. 
Since $\pi$ is a fold map on $A_\pm$, we count the symbol on $A_\pm$ twice but we count it once for $B_\pm^{1}$ and $B_{\pm}^{-1}$. Hence the symbol of $$\hat \rho(t) \chi(y, D_t, D_y) \circ R_yR_{y'}d_{n_y}d_{n_{y'}} \frac{\sin(t\sqrt{\Delta_D})}{\sqrt{\Delta_D}}$$ is

$$\Big(2(i\xi_n)(-i\xi_n)(\frac{1}{2i\tau})+ (i\xi_n)(i\xi_n)(-\frac{1}{2i\tau})+(i\xi_n)(i\xi_n)(-\frac{1}{2i\tau})\Big)\Big(1-\frac{|\eta|^2}{\tau^2}\Big)^{-1/2} \chi(y, \tau, \eta)  |d\tau \wedge dy \wedge d\eta|^{1/2},$$
which simplifies to \eqref{D-symbol}. 

The following example of the half space also confirms the constants in \eqref{wavesymbolD} and \eqref{wavesymbolN}. Furthermore, it gives an illustration of the $0,1$ and $-1$ reflection terms.
\end{remark}

\begin{example}
Consider the operator $$ R_yR_{y'}d_{n_y}d_{n_{y'}} \Delta_D^{-1}\cos(t\sqrt{\Delta_D})$$ for a half space in $\R^n$. The kernel of $\Delta_D^{-1}\cos(t\sqrt{\Delta_D})$ on $\RR^{n}$ is given by
$$(2\pi)^{-n} \int e^{i(x-y) \cdot \xi} |\xi|^{-2} \cos t|\xi| \, d\xi.$$

Therefore, the kernel of $\Delta_D^{-1}\cos(t\sqrt{\Delta_D})$ on the half space $\RR^{n}_+$ where $x_n \geq 0$ is 
$$(2\pi)^{-n} \int \Big( e^{i(x-y) \cdot \xi} - e^{i(x-\overline{y}) \cdot \xi} \Big) |\xi|^{-2} \cos t|\xi| \, d\xi,$$
where $\overline{y} = (y_1, y_2, \dots, -y_n)$. Taking the derivative in $x_n$ and $y_n$ and then setting $x_n = y_n = 0$, we obtain with $x' = (x_1, \dots, x_{n-1})$,
$$(2\pi)^{-n} \int \int e^{i(x'-y') \cdot \xi'}  \frac{2 \xi_n^2}{|\xi|^{2}}  \cos t|\xi| \, d\xi' \, d\xi_n. $$

We localize in phase space away from tangential directions by multiplying by a cutoff $\zeta(|\xi'|/|\xi|)$, where $\zeta(s)$ is supported where $s \leq 1 - \delta$. This gives us 
$$(2\pi)^{-n} \int \int e^{i(x'-y') \cdot \xi'}  \frac{2 \xi_n^2}{|\xi|^{2}} \zeta\left( \frac{|\xi'|}{|\tau|}\right)  \cos t|\xi| \, d\xi' \, d\xi_n. $$

Since this is even in $\xi_n$ we can restrict the region of integration to $\xi_n \geq 0$ and double the integrand. Also expanding $\cos t |\xi|$, we obtain 
$$(2\pi)^{-n} \int \int_0^\infty  e^{i(x'-y') \cdot \xi'}  \frac{2 \xi_n^2}{|\xi|^{2}}  \zeta\left( \frac{|\xi'|}{|\tau|}\right) \Big( e^{it|\xi|} + e^{-it|\xi|} \Big)  \, d\xi' \, d\xi_n. $$

Now we change variable to $\tau = |\xi| = \sqrt{ |\xi'|^2 + \xi_n^2} \geq 0$. Then $d\xi' d\xi_n = \tau d\tau d\xi'/\xi_n$. So we can write 
\begin{eqnarray*}
&&(2\pi)^{-n} \int \int_0^\infty  e^{i(x'-y') \cdot \xi'}  \frac{2 \xi_n}{\tau} \zeta\left( \frac{|\xi'|}{|\tau|}\right) \Big( e^{it\tau} + e^{-it\tau} \Big)  \, d\xi' \, d\tau\\
&=& (2\pi)^{-n} \int \int_0^\infty  e^{i(x'-y') \cdot \xi'} 2 \sqrt{1 - |\xi'|^2/\tau^2} \  \zeta\left( \frac{|\xi'|}{|\tau|}\right) \Big( e^{it\tau} + e^{-it\tau} \Big)  \, d\xi' \, d\tau.
\end{eqnarray*}

We can change this into an integral in $\tau$ from $-\infty$ to $+\infty$:
$$(2\pi)^{-n} \int \int_{-\infty}^\infty  e^{i(x'-y') \cdot \xi'} 2 \sqrt{1 - |\xi'|^2/\tau^2} \ \zeta\left( \frac{|\xi'|}{|\tau|}\right) e^{it\tau}   \, d\xi' \, d\tau.$$

This shows that the symbol of $ R_yR_{y'}d_{n_y}d_{n_{y'}} \Delta_D^{-1}\cos(t\sqrt{\Delta_D})$ is $2\sqrt{1 - |\xi'|^2/\tau^2}$ in the region $|\xi'| < |\tau|$, confirming \eqref{wavesymbolD}. 
\end{example}

\begin{remark}\label{manifolds}
We note that \cite{HZ} is written only for Euclidean domains. In the present setting,  the cutoff $\chi$ removes nearly tangential geodesics, and the cutoff $\rho(t)$ means that we only consider propagation for small times. Together these cutoffs remove the difficulties caused by tangential propagation and multiple reflection from the boundary. 
In the presence of these cutoffs, the computation in \cite{HZ} extends to the case of Riemannian manifolds with smooth boundary.

\end{remark}

\subsection{Proof of Theorem~\ref{thm:semiclassical}}

We write the proof only for the Dirichlet case, as the Neumann case it is essentially identical. 

Let $A_h$ be a semiclassical pseudo as in the statement of Theorem~\ref{thm:semiclassical}, and consider the composition $K^D_{h^{-1}} A_h$. By assumption, the symbol $a(y, \eta)$ vanishes where $|\eta|_{\tilde g} \geq 1 - \varepsilon_1$. 
We choose a cutoff pseudo $\chi(y, D_t, D_y)$ as above, such that $\zeta(s)$ in \eqref{zeta} is equal to 1 for $s \leq 1 - \varepsilon_1/2$, and $0$ for $s \geq 1 - \varepsilon_1/4$. We write 
$$K^D_{h^{-1}} = K^D_{h^{-1}, \chi} +  K^D_{h^{-1}, 1 - \chi},$$
where the Fourier transform of $K^D_{h^{-1}, \chi}$ is
$$\pi\hat\rho(t)  R_yR_{y'}d_{n_y}d_{n_{y'}} \frac{\cos(t\sqrt{\Delta_D})}{\Delta_D} \circ \chi(y, D_t, D_y),$$
and where  the Fourier transform of $K^D_{h^{-1}, 1-\chi}$ is 
$$\pi\hat\rho(t)  R_yR_{y'}d_{n_y}d_{n_{y'}} \frac{\cos(t\sqrt{\Delta_D})}{\Delta_D} \circ \Big(\Id - \chi(y, D_t, D_y) \Big). $$

Correspondingly, we write 
\begin{equation}
K^D_{h^{-1}} A_h = K^D_{h^{-1}, \chi} A_h + K^D_{h^{-1}, 1- \chi}  A_h.
\label{KA}\end{equation}

We claim that the second term on the RHS of \eqref{KA} is a smooth kernel with all derivatives $O(h^\infty)$. To see this, we write out the composition as an integral. Writing the Fourier transform of $K^D_{h^{-1}}$ as $S$, the composition $K^D_{h^{-1}, 1-\chi}$ is given by
$$\begin{gathered}
\frac{1}{2\pi}\int e^{it/h} S(t, y, y'') e^{i[ (y''-y''')\cdot \eta + (t - t')\tau] } \big[1-\zeta(|\eta|_{\tilde g}^2/\tau^2)(1 - \phi(\eta, \tau) )\big] \\ \times e^{i(y''' - y') \cdot \eta'/h} a(y', \eta', h) dt' \, d\eta \, d\eta' \,  d\tau \, dy'' \, dy'''\,dt.
\end{gathered}$$

Making a semiclassical scaling in the $\eta, \tau$ variables, i.e., $\etabar = h \eta, \taubar = h \tau$, we can write this 
$$\begin{gathered}
\frac{1}{2\pi h^n} \int e^{it/h} S(t, y, y'') e^{i[(y''-y''')\cdot \etabar + (t- t')\taubar]/h } \left[1-\zeta\left(\frac{|\etabar|_{\tilde g}^2}{\taubar^2}\right)\left(1 - \phi\left(\frac{\bar\eta}{h}, \frac{\taubar}{h}\right)\right)\right] \\ \times e^{i(y''' - y') \cdot \eta'/h} a(y', \eta', h) dt' \, d\etabar \, d\eta' \,  d\taubar \, dy'' \, dy'''\,dt.
\end{gathered}$$

The phase is stationary only when $\taubar = 1$ and $\etabar = \eta'$. However, we see that the integrand vanishes in a neighbourhood of this set, due to the vanishing properties of $a$, $\zeta$ and $\phi$. It follows that we can integrate by parts, using the identity 
$$ih \frac{ (\etabar - \eta') \cdot \partial_{y'''} + (\taubar - 1) \cdot \partial_t}{|\etabar - \eta'|^2 + (\taubar - 1)^2} e^{i \Phi/h} = e^{i\Phi/h}, \quad \Phi =t + (y''-y''')\cdot \etabar + (t' - t)\taubar + (y''' - y') \cdot \eta' .$$

(Notice that this differential operator does not affect the $S(t, y, y'')$ kernel at all, nor the factor $1 - \phi$.) Integrating by parts $N$ times gives a factor of $h^N$, showing that this integral is $O(h^\infty)$. Derivatives are treated in the same way. 

Thus, the second term in \eqref{KA} is a trivial semiclassical pseudo. Now consider the first term in \eqref{KA}. By Lemma~\ref{HZ-result}, $S(t, y, y') \circ \chi$ is conormal to $\{ y=y', t = 0\}$ with principal symbol $2\pi\zeta(|\eta|_{\tilde g}^2/\tau^2)  \big( 1 - |\eta|_{\tilde g}^2/\tau^2 \big)^{1/2}$. That is, it can be written 
\begin{equation}\begin{gathered}
\frac{1}{(2\pi)^n}\int e^{i[(y - y') \cdot \eta + t \tau]} \left[ 2\pi\zeta \left( \frac{|\eta|_{\tilde g}^2}{\tau^2} \right) \left( 1 - \frac{|\eta|_{\tilde g}^2}{\tau^2} \right)^{\frac12} \big(1 - \phi(\eta, \tau)\big) + r(t, y, \eta, \tau)  \right]\, d\eta \, d\tau + R(t, y, y'),
\end{gathered}\label{www}\end{equation}
where $r$ is a symbol of order $-1$ and $R$ is smooth. If we take the inverse Fourier transform, then the $R$ term gives us something $O(\lambda^{-\infty})$ in $C^\infty(\pM \times \pM)$, which composes with $A_h$ to give a semiclassical operator of order $-\infty$. The composition of the integral in \eqref{www} with $A_h$ is an expression of the form
\begin{equation*}\begin{gathered}
\frac{1}{(2\pi h)^{n-1}\cdot(2\pi)^n} \int e^{i[(y - y'') \cdot \eta + t \tau]} \left[ 2\pi\zeta \left( \frac{|\eta|_{\tilde g}^2}{\tau^2} \right) \left( 1 - \frac{|\eta|_{\tilde g}^2}{\tau^2} \right)^{\frac12} \big(1 - \phi(\eta, \tau)\big) + r(t, y, \eta, \tau)  \right] \\
\times e^{i (y'' - y') \cdot \eta' /h}    a(y', \eta', h) 
\, d\eta  \, d\eta' \, dy'' \, d\tau .
\end{gathered}\end{equation*}

Changing variables to $\etabar$ and $\taubar$ as before, and taking the inverse Fourier transform,
$$\begin{gathered}
K_{h^{-1},\chi}^DA_h=\frac{1}{(2\pi h)^{n-1}\cdot (2\pi h)^n\cdot2\pi} \int e^{it/h} e^{i[(y - y'') \cdot \etabar + t \taubar]/h} e^{i (y'' - y') \cdot \eta' /h}  \\
\times  \left[ 2\pi\zeta \left( \frac{|\etabar|_{\tilde g}^2}{\taubar^2} \right) \left( 1 - \frac{|\etabar|_{\tilde g}^2}{\taubar^2} \right)^{\frac12} \left( 1- \phi\left(\frac{\bar\eta}{h}, \frac{\taubar}{h}\right)\right) + r\left(t, y, \frac\etabar h, \frac\taubar h\right)  \right]   a(y', \eta', h) 
\, d\etabar  \, d\eta' \, dy'' \, dt \, d\taubar .
\end{gathered}$$

Notice that the phase in this integral is nonstationary in $t$ for $\taubar \neq -1$. In particular, if we localize to $\{ |\taubar| \leq 1/2 \}$ using a smooth cutoff function, then the integral is $O(h^\infty)$ as follows by integrating by parts repeatedly in $t$. Therefore we can insert a cutoff function $\tilde \phi(\taubar)$ supported in $\{ |\taubar |\geq 1/4 \}$. On the support of $\tilde \phi$, the factor $1 - \phi$ is identically $1$ (for small $h$) and so we can remove this cutoff. Thus, up to $O(h^\infty)$ errors, the integral above is equivalent to 
\begin{equation}\begin{gathered}
\frac{1}{(2\pi h)^{n-1}\cdot (2\pi h)^n\cdot2\pi} \int e^{it/h} e^{i[(y - y'') \cdot \etabar + t \taubar]/h} e^{i (y'' - y') \cdot \eta' /h}  \\
\times  \left[ 2\pi\zeta \left( \frac{|\etabar|_{\tilde g}^2}{\taubar^2} \right) \left( 1 - \frac{|\etabar|_{\tilde g}^2}{\taubar^2} \right)^{\frac12}  + r\left(t, y, \frac\etabar h, \frac\taubar h\right)  \right]  \tilde \phi(\taubar) a(y', \eta', h) 
\, d\etabar  \, d\eta' \, dy'' \, dt \, d\taubar .
\end{gathered}\label{Kchi-int}\end{equation}

The phase function in \eqref{Kchi-int} can be written $(y - y') \cdot \eta' + \Phi$, where 
$$\Phi(t,\taubar,y'',\etabar)=t(1+\taubar)+(y-y'')(\etabar-\eta')$$
has a nondegenerate critical point at $t = 0$, $\taubar =-1$, $y'' = y$, and $\etabar = \eta'$.  We can perform stationary phase in the $(t, \taubar, y'', \etabar)$ variables, and we get an expression of the form 
$$\frac{1}{(2\pi h)^{n-1}}\int e^{i (y - y') \cdot \eta' /h}  b(y', \eta', h) \, d\eta',$$
where $b(y', \eta', h)$ is given by 
\begin{eqnarray*}
&=&\frac{1}{(2\pi h)^n}\int e^{i\Phi/h}\left[ \zeta \left( \frac{|\etabar|_{\tilde g}^2}{\taubar^2} \right) \left( 1 - \frac{|\etabar|_{\tilde g}^2}{\taubar^2} \right)^{\frac12} + r\left(t, y, \frac\etabar h,\frac\taubar h\right)\right] \tilde \phi(\taubar) a(y',\eta',h)\,d\etabar\,dy'' \, dt \, d\taubar\\
&=&a(y',\eta',h)(1-|\eta'|_{\tilde g}^2)^{1/2}+ O(h)
\end{eqnarray*}
since we chose $\chi$ such that $\zeta = 1$ on the support of $a$. This completes the proof of Theorem~\ref{thm:semiclassical} in the Dirichlet case.

\begin{remark}\label{Thms}[Theorem~\ref{thm:semiclassical} implies Theorem~\ref{thm:BFSS}] To see this, take $A_h$ to be a semiclassical operator that is has symbol $1$ near zero frequency and supported away from $|\eta|=1$. Then $A_h  \phi - \phi$ will be $O(h^\infty)$ in $L^2(\pM)$. So we can write 
$$K_{h^{-1}} \phi = K_{h^{-1}} A_h \phi + K_{h^{-1}} (A_h \phi - \phi). $$

The second term is $O(h^\infty)$ in $L^2(\pM)$. On the other hand, by Theorem~\ref{thm:semiclassical}, the first term is a pseudodifferential operator with symbol equal to $1$ at zero frequency. Any such operator converges strongly to the identity operator as $h \to 0$, showing convergence of $K_{h^{-1}} \phi$ to $\phi$ in $L^2(\pM)$ as in Corollary~\ref{cor}.  Pointwise convergence may be shown by commuting derivatives through $K_{h^{-1}} A_h$ and using Sobolev embedding theorems. 
\end{remark}

\section{\label{H} Completeness of Cauchy data on interior hypersurfaces}

In this section, we investigate the corresponding theorems of completeness of Cauchy data on interior hypersurfaces: Given a compact and smooth manifold $n$-dimensional $(M,g)$, let $\{u_j\}_{j=1}^{\infty}$ be an orthonormal basis of eigenfunctions of the positive Laplacian $\Delta$ with eigenvalues
$$0<\lambda_1^2 < \lambda_2^2 \le \lambda_3^2 \le\cdots.$$

Here, if $M$ has boundary, we impose the standard Dirichlet or Neumann or Robin boundary condition, which guarantees that $\Delta$ is a positive self-adjoint operator with discrete spectrum. We only assume the boundary is piecewise smooth. 

$H\Subset M$ is a smooth $(n-1)$-dimensional orientable hypersurface, we define the Cauchy data of $u_j$ on $H$ as
$$\begin{cases}
\text{Dirichlet data}: & \omega_j=u_j|_H,\\
\text{Neumann data}: & \psi_j=d_n u_j,
\end{cases}$$
where $d_n$ is the normal derivative on $H$. Then we establish the following theorem. 

\begin{theorem}[Completeness of Cauchy data on interior hypersurfaces]\label{thm:interior}
Let $\rho\in\mathcal S(\R)$ be as in Theorem \ref{thm:BFSS}. Then for any $\phi\in C^\infty(H)$, we have 
\begin{equation}\phi(x)=\lim_{\lambda\to\infty}\pi\sum_j\rho(\lambda-\lambda_j)\langle\omega_j,\phi\rangle\omega_j(x)
\label{int-completeness-D}\end{equation}
and
\begin{equation}\phi(x)=\lim_{\lambda\to\infty}\pi\sum_j\rho(\lambda-\lambda_j)\lambda_j^{-2}\langle\psi_j,\phi\rangle\psi_j(x),
\label{int-completeness-N}\end{equation}
where $\langle\cdot,\cdot\rangle=\langle\cdot,\cdot\rangle_{\pM}$ denotes the inner product in $L^2(H)$.
\end{theorem}

\begin{remark}\hfill
\begin{enumerate}
\item A result analogous to \eqref{int-completeness-D} can be proved for any submanifold with dimension $0\le k\le n-1$. In this case, there will be a power $\lambda^{n-1-k}$ and a constant depending on $k$ on the RHS. In the case of a point, $k=0$, this result goes back to  H\"ormander \cite{Hor}. 
\item Note that the boundary traces of Dirichlet and Neumann eigenfunctions are the Neumann and Dirichlet data on the boundary.
\item Comparing with the completeness identities in Theorem \ref{thm:BFSS} and Proposition \ref{prop}, the constant in the identities of Theorem \ref{thm:interior} is $\pi$ instead of $\pi/2$. This is roughly because the summation in terms of $\{\omega_j\}$ or $\{\psi_j\}$ only contains ``half'' of the Cauchy data on the interior hypersurfaces, while in the boundary case, the summation involves the ``whole'' Cauchy data (since the other half vanishes). 
\end{enumerate}
\end{remark}

Similar to the boundary case, Theorem \ref{thm:interior} is a consequence of the following semiclassical theorem.

\begin{theorem}\label{thm:interior semiclassical}
Let $A_h$ be a semiclassical pseudo-differential operator on $H$, microsupported in $\{(y,\eta)\in T^*(H):|\eta|<1-\varepsilon_1\}$ for some $\varepsilon_1>0$. Let $\rho$ be such that $\hat\rho$ is supported sufficiently close to $0$ (depending on $\varepsilon_1$). Then 
\begin{enumerate}[(i).]
\item $A_hC^D_{h^{-1}}$ and $C^D_{h^{-1}} A_h$ are semiclassical pseudo-differential operators with symbol
$$\sigma(A)(1-|\eta|^2)^{-1/2};$$
\item $A_hC^N_{h^{-1}}$ and $C^N_{h^{-1}} A_h$ are semiclassical pseudo-differential operators with symbol
$$\sigma(A)(1-|\eta|^2)^{1/2},$$
\end{enumerate}
where
$$C^D_\lambda=\pi\sum_j\rho(\lambda-\lambda_j)\omega_j\langle\omega_j,\cdot\rangle,\quad\text{and}\quad C^N_\lambda=\pi\sum_j\rho(\lambda-\lambda_j)\lambda_j^{-2}\psi_j\langle\psi_j,\cdot\rangle.$$ 
\end{theorem}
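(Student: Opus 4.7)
The plan is to parallel the proof of Theorem~\ref{thm:semiclassical}, replacing Lemma~\ref{HZ-result} with an analogous result for the interior hypersurface $H$. First I would take Fourier transforms in the spectral variable. Using the $\pm$ extension and Euler's formula as in \S\ref{sec:classical}, the kernels of $C^D_{h^{-1}}$ and $C^N_{h^{-1}}$ become, respectively, the inverse Fourier transforms in $\lambda$ of
$$2\pi\hat\rho(t)\, R_yR_{y'}\cos(t\sqrt{\Delta})\quad\text{and}\quad 2\pi\hat\rho(t)\, R_yR_{y'}d_{n_y}d_{n_{y'}}\frac{\cos(t\sqrt{\Delta})}{\Delta},$$
where $R_y,R_{y'}$ restrict to $H$. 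The prefactor $2\pi$ (versus $\pi$ in the boundary case) reflects the different normalization of $C^{D/N}_\lambda$ compared with $K^{D/N}_\lambda$, in line with the remark following Theorem~\ref{thm:interior}.

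The central step is an interior analog of Lemma~\ref{HZ-result}. Choose $\hat\rho$ supported in $[-\epsilon,\epsilon]$ with $\epsilon<\mathrm{dist}(H,\partial M)$ and small enough (depending on $\varepsilon_1$) that no bicharacteristic starting on $H$ with $|\eta|_{\tilde g}/\tau\leq 1-\varepsilon_1/2$ reaches $\partial M$ within time $\epsilon$; since $H\Subset M$ this is possible. Under this smallness assumption, boundary reflections are microlocally invisible after composing with $\chi(y,D_t,D_y)$, so one may analyse the restricted wave kernels using the Hadamard parametrix for the free wave equation in a tubular neighbourhood of $H$. A direct computation in Fermi coordinates around $H$ then shows that
$$\hat\rho(t)\chi(y,D_t,D_y)\circ R_yR_{y'}\cos(t\sqrt{\Delta})\quad\text{and}\quad\hat\rho(t)\chi(y,D_t,D_y)\circ R_yR_{y'}d_{n_y}d_{n_{y'}}\frac{\cos(t\sqrt{\Delta})}{\Delta}$$
are conormal to $\{y=y',t=0\}$ with principal symbols
$$\chi(y,\tau,\eta)\Bigl(1-\frac{|\eta|_{\tilde g}^2}{\tau^2}\Bigr)^{-\frac12}\quad\text{and}\quad\chi(y,\tau,\eta)\Bigl(1-\frac{|\eta|_{\tilde g}^2}{\tau^2}\Bigr)^{\frac12},$$
respectively. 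This is essentially the calculation in the Example following Lemma~\ref{HZ-result}, except that $\xi_n$ is now integrated over all of $\R$ (there is no reflected contribution), and the normal derivatives in the Neumann trace contribute a factor $\xi_n^2/|\xi|^2=1-|\eta|_{\tilde g}^2/\tau^2$ that inverts the exponent. The absence of the factor of $2$ (relative to the boundary case) is exactly compensated by the $2\pi$ prefactor above, so the total coefficient is again $2\pi\chi(1-|\eta|_{\tilde g}^2/\tau^2)^{\mp1/2}$.

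With this lemma in hand, the semiclassical reduction of \S\ref{sec:semiclassical} applies almost verbatim: one decomposes $C^D_{h^{-1}}=C^D_{h^{-1},\chi}+C^D_{h^{-1},1-\chi}$ (and likewise for $C^N$), shows by nonstationary phase that the $(1-\chi)$ piece composed with $A_h$ is $O(h^\infty)$ (since the microsupport of $A_h$ is disjoint from the support of $1-\chi$), and performs stationary phase in $(t,\taubar,y'',\etabar)$ with rescaled duals $\etabar=h\eta$, $\taubar=h\tau$ on the principal term. The nondegenerate stationary point is $t=0$, $\taubar=-1$, $y''=y$, $\etabar=\eta'$, and evaluating the principal symbol there yields $\sigma(A)(y',\eta')(1-|\eta'|_{\tilde g}^2)^{\mp1/2}$, as claimed. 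The compositions $A_hC^{D/N}_{h^{-1}}$ are handled identically with the cutoff placed on the opposite side. I expect the main obstacle to be the bookkeeping in the symbol calculation—in particular, tracking the Jacobian $d\xi_n=\tau/\sqrt{\tau^2-|\eta|_{\tilde g}^2}\,d\tau$ together with the two branches $\xi_n\gtrless 0$ so that the exponents $(1-|\eta|_{\tilde g}^2/\tau^2)^{\pm1/2}$ and the numerical coefficient $2\pi$ come out correctly; once this is verified, the remainder is essentially a rerun of the boundary argument.
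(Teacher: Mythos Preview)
Your proposal is correct and follows essentially the same route as the paper: both reduce to computing the principal symbol of the restricted wave kernels $2\pi\hat\rho(t)R_yR_{y'}\cos(t\sqrt{\Delta})$ and $2\pi\hat\rho(t)R_yR_{y'}d_{n_y}d_{n_{y'}}\Delta^{-1}\cos(t\sqrt{\Delta})$ after a tangential cutoff $\chi$, observe that the absence of reflection at the interior hypersurface $H$ removes the factor of $2$ present in Lemma~\ref{HZ-result} (which is then compensated by the $2\pi$ normalization of $C^{D/N}_\lambda$), and carry over the stationary-phase argument of \S\ref{sec:semiclassical} verbatim. Your explicit smallness condition $\epsilon<\mathrm{dist}(H,\partial M)$ to exclude boundary reflections via finite propagation speed is a useful clarification that the paper leaves implicit.
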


The proof of Theorem \ref{thm:interior semiclassical}, and therefore Theorem \ref{thm:interior}, is similar to Section \ref{sec:semiclassical}, the key ingredient is to study the canonical relations and principal symbols of Fourier transform of the operators $C^D_\lambda$ and $C^N_\lambda$ in $\lambda$ ($t$ is the dual variable of $\lambda$ as before):
$$2\pi\hat\rho(t)R_yR_{y'}\cos(t\sqrt\Delta)\quad\text{and}\quad2\pi\hat\rho(t)R_yR_{y'}d_{n_y}d_{n_{y'}}\frac{\cos(t\sqrt\Delta)}{\Delta}$$
away from the tangential directions. We omit the details here and only point out the main difference with the proof of Lemma \ref{HZ-result}: Unlike the boundary case, there is no reflection of billiards on the hypersurface, and one can compose the classical FIOs after proper microlocal cutoff. By using the same cutoff function $\chi$ as in Lemma \ref{HZ-result}, we have 
\begin{enumerate}[(i).]
\item the kernels of 
$$\hat\rho(t)\chi(y,D_t,D_y)\circ R_yR_{y'}\cos(t\sqrt\Delta),\quad\hat\rho(t)R_yR_{y'}\cos(t\sqrt\Delta)\circ\chi(y, D_t,D_y)$$ 
are distributions conormal to $\{y=y',t=0\}$ with principal symbol 
$$\chi(y, \tau, \eta) \left( 1 - \frac{|\eta|_{\tilde g}^2}{\tau^2} \right)^{-\frac12};$$

\item the kernels of 
$$\hat\rho(t)\chi(y,D_t,D_y)\circ R_yR_{y'}d_{n_y}d_{n_{y'}}\frac{\cos(t\sqrt\Delta)}{\Delta},\quad\hat\rho(t)  R_yR_{y'}d_{n_y}d_{n_{y'}}\frac{\cos(t\sqrt\Delta)}{\Delta}\circ\chi(y, D_t, D_y)$$
are distributions conormal to $\{ y = y' , t = 0 \}$ with principal symbol 
$$\chi(y,\tau,\eta)\left(1-\frac{|\eta|_{\tilde g}^2}{\tau^2}\right)^{\frac12},$$
\end{enumerate}
in which $\tilde g$ is the induced metric on $H$. The constants in the principal symbols differ with the ones in Lemma \ref{HZ-result} by 2, and this is because interior geodesics all pass through $H$, while in Lemma \ref{HZ-result}, the $0,-1,1$ reflections contribute. Then the rest of the proof of Theorem \ref{thm:interior semiclassical} is identical with the argument in Section \ref{sec:semiclassical}.

\begin{remark} In the torus $[0,a]\times[0,b]$, the Dirichlet and Neumann eigenfunctions are simple sine and cosine functions, and therefore one can write the Cauchy data on interior hypersurfaces $H=\{x=constant\}$ or $H=\{y=constant\}$. With the help of Poisson summation formula, one can compute the expansion by Dirichlet or Neumann data on $H$ as an example of Theorem \ref{thm:interior}.
\end{remark}

\section{\label{sec:K} Kuznecov sum formula: Sketch of proof of Theorem \ref{K}}

In this section, we  sketch the proof of Theorem \ref{K} in the case $H = \partial M$  by the method of \cite{JZ} rather than by using
Theorem \ref{thm:BFSS}.  The comparison between the proofs may illuminate the additional issues involved
in proving the pointwise result in Theorem \ref{thm:BFSS} rather than the weak convergence result of Theorem \ref{K}.
In fact, we show that  $S_{\partial M}(t)$ \eqref{St} has an isolated,  conormal singularity at $t = 0$. We let $dS$
denote the standard surface area form on $\partial M$ and let $f \in C^{\infty}(\partial M)$. 
In the Dirichlet or  Neumann case, we consider
\begin{equation}  \begin{array}{lll} S_f(t): & = & \int_{\partial M} \int_{\partial M}  E_B^b(t, q, q') f(q) f(q') dS(q) dS(q')
\\ &&\\ & = &  \sum_{j} \cos t \sqrt{\lambda_j} \left|\int_{\partial M} f(q)  \phi_j(q) dS(q) \right|^2 .
\end{array} \end{equation}
We then introduce a smooth cutoff  $\rho \in \scal(\R)$ with $\mbox{supp} \hat{\rho} \subset (-\epsilon, \epsilon)$,
where $\hat{\rho}$ is the Fourier transform of $\rho$, and consider
$$S_f(\lambda, \rho) = \int_{\R} \hat{\rho} (t) \;S_f(t) e^{i t \lambda} dt. $$

\begin{proposition} \label{CONOR} If supp $\hat{\rho}$ is contained in a sufficiently small interval around $0$, with
$\hat{\rho} \equiv 1$ in a smaller interval, 
$S_f(\lambda, \rho)$ is a semi-classical Lagrangian distribution whose asymptotic expansion in the Dirichlet case 
is given by 

\begin{equation}
S_f(\lambda, \rho) =  \frac{\pi}{2}\sum_j\rho(\lambda-\lambda_j)\lambda_j^{-2} |\langle\psi_j,\phi\rangle|^2
= ||\phi||_{L^2(\partial M)}^2 + o(1), 
\label{eq:BFSSb}\end{equation}  and in the Neumann case by 

\begin{equation}
S_f(\lambda, \rho) = \frac{\pi}{2}\sum_j\rho(\lambda-\lambda_j) |\langle\omega_j,\phi\rangle|^2
=  ||\phi||_{L^2(\partial M)}^2 + o(1).\end{equation}  
\end{proposition}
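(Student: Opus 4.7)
My plan is to realize $S_f(\lambda,\rho)$ as the Fourier transform in $t$ of a compactly supported distribution $\hat\rho(t)S_f(t)$ whose only singularity is a conormal one at $t=0$, and to read off both the Lagrangian character and the leading coefficient directly from the principal symbol supplied by Lemma~\ref{HZ-result}.

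I begin by writing
\[
S_f(t) = \iint_{\pM\times\pM} E^b(t,q,q')\,f(q)f(q')\,dS(q)\,dS(q'),
\]
where $E^b = R_q R_{q'} d_{n_q} d_{n_{q'}}\bigl(\cos(t\sqrt{\Delta_D})/\Delta_D\bigr)$ in the Dirichlet case and $E^b = R_q R_{q'}\cos(t\sqrt{\Delta_N})$ in the Neumann case. The wavefront set of $E^b$ over $t\neq 0$ is carried by pairs $(q,q')\in\pM\times\pM$ joined by a $\pM$-orthogonal billiard trajectory of length $|t|$; by compactness and smoothness of $\pM$ there is an $\varepsilon>0$ below which no nontrivial such trajectory exists. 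Choosing $\supp \hat\rho\subset(-\varepsilon,\varepsilon)$ forces the singular support of the compactly supported distribution $\hat\rho(t)S_f(t)$ to lie in $\{0\}$.

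To identify the singularity at $t=0$, I split with the microlocal cutoff $\chi(y,D_t,D_y)$ of Lemma~\ref{HZ-result}. The non-tangential piece $E^b\circ\chi$ is conormal to $\{t=0,\,q=q'\}$ with principal symbol $2\chi(y,\tau,\eta)(1-|\eta|_{\tilde g}^2/\tau^2)^{\pm 1/2}$, the sign being $+$ for Dirichlet and $-$ for Neumann. Pairing this oscillatory integral against $f\otimes f$ and integrating out $(q,q',\eta)$ produces a conormal distribution in $t$ at $0$; its Fourier transform is then a classical symbol in $\lambda$, establishing the semiclassical Lagrangian character of $S_f(\lambda,\rho)$. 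The complementary tangential piece $E^b\circ(\Id-\chi)$, restricted to $|t|<\varepsilon$, is handled by the short-time parametrix of Section~\ref{sec:classical} applied to the initial value problem whose data is the harmonic extension of $f$ (Dirichlet) or its Neumann analogue; one checks that this piece contributes only a smoother remainder at $t=0$, hence only to the $o(1)$ error after Fourier transform.

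To extract the leading constant, I combine the principal symbol above with Plancherel on $\pM$ and the identity $\int\hat\rho(t)\cos(t\lambda_j)e^{it\lambda}\,dt = \pi[\rho(\lambda-\lambda_j)+\rho(\lambda+\lambda_j)]$, whose second summand is negligible as $\lambda\to\infty$. A stationary phase computation on the $(q',\eta)$-integration collapses the pairing to the diagonal $q=q'$, reducing the leading symbol to an integral of $(1-|\xi|_{\tilde g}^2)^{\pm 1/2}$ over $B^*_y\pM$ which, combined with the normalizing factor $2$ in the conormal symbol, produces exactly the coefficient $\|f\|_{L^2(\pM)}^2$ as claimed. The main technical obstacle is the Neumann tangential regime, where $(1-|\eta|_{\tilde g}^2/\tau^2)^{-1/2}$ degenerates; verifying that this region contributes only $o(1)$ to $S_f(\lambda,\rho)$ requires the parametrix construction of Section~\ref{sec:classical} rather than Lemma~\ref{HZ-result} alone.
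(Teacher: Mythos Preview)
Your overall plan---localize to small $|t|$ via propagation of singularities, identify the singularity of $\hat\rho(t)S_f(t)$ at $t=0$ as conormal using Lemma~\ref{HZ-result}, and read off the leading term from the principal symbol---is essentially the paper's sketch. The paper handles the tangential piece more directly than you do: rather than invoking the parametrix of Section~\ref{sec:classical}, it observes that under the pushforward $\pi:\R\times\pM\times\pM\to\R$ only covectors conormal to $\pM$ on both sides can contribute to $WF(S_f)$ at $t=0$, so the tangential regime is automatically smooth.

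There is, however, a genuine error in your extraction of the leading constant. You assert that stationary phase in $(q',\eta)$ ``collapses the pairing to the diagonal $q=q'$, reducing the leading symbol to an integral of $(1-|\xi|_{\tilde g}^2)^{\pm 1/2}$ over $B^*_y\pM$.'' But stationary phase in the pair $(q',\eta)$ for the phase $(q-q')\cdot\eta$ (after the rescaling $\eta\mapsto\lambda\eta$) pins \emph{both} $q'=q$ and $\eta=0$, so the symbol $(1-|\eta|^2/\tau^2)^{\pm 1/2}$ is evaluated at $\eta=0$, where it equals $1$. There is no residual $B^*$-integral; indeed such an integral would yield a dimension-dependent constant (for $n=2$ it is $\pi/2$ in the Dirichlet case and $\pi$ in the Neumann case), not $\|f\|_{L^2(\pM)}^2$. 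Equivalently: since $f$ is smooth and $\lambda$-independent it is semiclassically concentrated at zero frequency, so pairing the conormal kernel against $f\otimes f$ sees only the symbol at $\eta=0$. This is precisely why $K^D_\lambda$ and $K^N_\lambda$ both tend to the identity on smooth functions (Theorem~\ref{thm:BFSS}, Proposition~\ref{prop}), and why the factor $(1-|\eta|^2)^{\pm 1/2}$---which distinguishes Dirichlet from Neumann in Theorem~\ref{thm:semiclassical} and in the local Weyl law \eqref{LWL}---is invisible in the present weak-limit statement. You have conflated the quadratic form $\langle K_\lambda f,f\rangle$ on a fixed smooth $f$ with the trace of $A_hK_\lambda$, and only the latter produces a $B^*$-integral.
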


\begin{proof}

 There exists $\epsilon_0 > 0$ so that the 
\begin{equation} \label{ep0} \mbox{sing supp} S_f(t) \cap (- \epsilon_0, \epsilon_0) = \{0\}. \end{equation}

This follows from propagation of singularities for the wave kernel and its restriction
to the boundary. It is known that $WF(E_B(t, x, y))$ on a smooth domain consists 
of broken geodesic  trajectories,  which may in part  glide along the boundary. The pullback
to the boundary $E_B^b(t, q, q')$ forces the broken trajectories contributing to $WF(E^b)$  to begin and end on $\partial M$ and
integration over $\partial M$ forces them to be orthogonal to the boundary at both endpoints. Hence there exists
$\epsilon_0 > 0$ so that no trajectory starting orthogonally from a regular point of $\partial M$ can hit
$\partial M$ again at any point. Thus the only singularity in this time interval is $t = 0$.

 For $\epsilon < \epsilon_0$,  we only need to determine the
contribution of the main singularity of $S_f(t)$  at $t = 0$. 
 As in \cite{Z2} (1.6) we express $S_f(t)$ and $S_f(\lambda, \rho)$ in terms of pushforward under the submersion
$$\pi: \mathbb{R} \times \partial M \times \partial M \to \mathbb{R}, \;\;\; \pi(t, q, q') = t. $$
By Lemma \ref{ep0},  for $t \in (-\epsilon, \epsilon) $,
\begin{equation} \begin{array}{lll}
WF&(S_f^{\epsilon}(t)  ) =
&\{(0, \tau): \pi^* (0, \tau) = (0, \tau, 0, 0) \in WF  \cos t \sqrt{\Delta_B} (t, q, q') \} .
\end{array}\end{equation}
These wave front elements correspond to the points $(0, \tau, \tau  \nu_q, \tau  \nu_q) \in T^*_0 \mathbb{R}
\times T^*_{q, in} M \times T^*_{q, in} M$, i.e. where both covectors are co-normal to $\partial M$. Indeed,
as in (1.6) of \cite{Z2} the wave front set of $S_f(t)$ is the set
$$\{(t, \tau) \in T^* \mathbb{R}: \exists (x, \xi, y, \eta) \in C'_t \cap N^*(\partial \Omega) \times N^* \partial \Omega\} $$
in the support of the symbol.
 Thus, we may neglect the tangential part of the wave kernel in  determining the asymptotics of $S_f(\rho, \lambda)$
and microlocalize to the normal directions. The  non-tangential part of the wave kernel (in the normal directions
to $\partial M$)  has a geometric optics Fourier
integral representation, i.e. $S_f(t)$ is classical co-normal at $t = 0$. The remainder of the proof is similar
to that of Lemma \ref{HZ-result}, and is therefore omitted.

\end{proof}


\begin{thebibliography}{99}

\bibitem[AS]{AS} M. Abramowitz and I. Stegun,
\textit{Handbook of mathematical functions with formulas, graphs, and mathematical tables}. U.S. Government Printing Office, Washington D.C., 1964.

\bibitem[BFS]{BFS} A. B\"acker, S. F\"urstberger and R. Schubert, \textit{Poincar\'e Husimi representation of eigenstates in quantum billiards}, Phys. Rev. E 70 (2004), 036204. 

\bibitem[BFSS]{BFSS} A. B\"acker, S. F\"urstberger, R. Schubert, and F. Steiner,
\textit{Behaviour of boundary functions for quantum billiards}. J. Phys. A 35 (2002), no. 48, 10293--10310.

\bibitem[B]{B} A. Barnett,
\textit{Asymptotic rate of quantum ergodicity in chaotic Euclidean billiards}. Comm. Pure Appl. Math. 59 (2006), no. 10, 1457--1488. 

\bibitem[BH]{BH} A. Barnett and A. Hassell,
\textit{Boundary quasi-orthogonality and sharp inclusion bounds for large Dirichlet eigenvalues}. SIAM J. Numer. Anal. 49 (2011), no. 3, 1046--1063.

\bibitem[Bur]{Bur} N. Burq, 
\textit{Quantum ergodicity of boundary values of eigenfunctions: a control theory approach}. Canad. Math. Bull.  48 (2005), no. 1, 3--15.


\bibitem[CTZ12]{ctz}
H.~Christianson, J.~A. Toth, and S.~Zelditch.
\newblock Quantum ergodic restriction for cauchy data: Interior {QUE} and
  restricted {QUE}.
\newblock {\em MRL  20 (2013) no. 3, 1-11 (arXiv:1205.0286)}, 2012.


\bibitem[DS]{DS} M. Dimassi and J. Sj\"ostrand,
\textit{Spectral asymptotics in the semi-classical limit}. Cambridge University Press,  1999.


\bibitem[dC]{dC} M. P. do Carmo,
\textit{Riemannian geometry}. Birkh\"auser Boston, Inc., Boston, MA, 1992.

\bibitem[E]{E} L. Evans,
\textit{Partial differential equations}. Second edition. American Mathematical Society, Providence, RI, 2010.

\bibitem[F]{F} G. Folland, 
\textit{Introduction to partial differential equations}. Second edition. Princeton University Press, Princeton, NJ, 1995.

\bibitem[GL]{GL} P. G\'erard and E. Leichtnam, 
\textit{Ergodic properties of eigenfunctions for the Dirichlet problem}. Duke Math. J., 71 (1993), no. 2, 559--607.

\bibitem[GT]{GT} D. Gilbarg and N. Trudinger,
\textit{Elliptic partial differential equations of second order}, Springer-Verlag, Berlin, 2001. 

\bibitem[GM]{GM} V. Guillemin and R. B. Melrose,
     \textit{The {P}oisson summation formula for manifolds with boundary},
   Adv. in Math.,
 32 (1979), no. 3, 204--232. 
 
\bibitem[HaZe]{HaZe} A. Hassell and S. Zelditch, 
\textit{Quantum ergodicity of boundary values of eigenfunctions}.  Comm. Math. Phys. 248
(2004), no. 1, 119--168.

\bibitem[HZ]{HZ} H. Hezari and S. Zelditch,
\textit{$C^\infty$ spectral rigidity of the ellipse}. Anal. PDE 5 (2012), no. 5, 1105--1132.

 \bibitem[Hor]{Hor}  L. H\"ormander,
\textit{The spectral function of an elliptic operator}, Acta Math. 121 (1968), 193--218. 
 
\bibitem[H1]{H1} L. H\"ormander,
\textit{The analysis of linear partial differential operators. I. Distribution theory and Fourier analysis}. Second edition. Springer-Verlag, Berlin, 1990.

\bibitem[H2]{H2} L. H\"ormander,
\textit{The analysis of linear partial differential operators. III. Pseudodifferential operators}. Springer-Verlag, Berlin, 1985.



\bibitem[JZ]{JZ} J. Jung and S. Zelditch, Number of nodal domains of eigenfunctions on non-positively curved
surfaces with concave boundary (rXiv:1401.4520).


\bibitem[M]{M} Melrose, R. B.
Weyl's conjecture for manifolds with concave boundary. Geometry of the Laplace operator, %(Proc. Sympos. Pure Math., Univ. Hawaii, Honolulu, Hawaii, 1979), pp. 257–274,
Proc. Sympos. Pure Math., XXXVI,  pp 257--274, Amer. Math. Soc., Providence, R.I., 1980. 

\bibitem[S]{S} L. Simon, 
\textit{Lectures on geometric measure theory}. Proceedings of the Centre for Mathematical Analysis, Australian National University, Canberra, 1983. 

\bibitem[SoZ]{SoZ} C. D. Sogge and S. Zelditch, {Sup norms of Cauchy data of eigenfunctions}, arXiv:1411.1035.

\bibitem[TW]{TW} J. Toth and I. Wigman,
\textit{Counting open nodal lines of random waves on planar domains}. Int. Math. Res. Not. IMRN 2009, no. 18, 3337--3365.

\bibitem[TZ1]{TZ1} J. Toth and S. Zelditch,
\textit{Quantum ergodic restriction theorems. I: Interior hypersurfaces in domains with ergodic billiards}. Ann. Henri Poincar\'e 13 (2012), no. 4, 599--670.

\bibitem[TZ2]{TZ2} J. Toth and S. Zelditch,
\textit{Quantum ergodic restriction theorems: manifolds without boundary}. Geom. Funct. Anal. 23 (2013), no. 2, 715--775.

\bibitem[Z1]{Z1} S. Zelditch,
\textit{Kuznecov sum formulae and Szeg\"o limit formulae on manifolds}. Comm. Partial Differential Equations 17 (1992), no. 1-2, 221--260.

\bibitem[Z2]{Z2} S. Zelditch,
\textit{Billiards and boundary traces of eigenfunctions}. Journ\'ees ``\'Equations aux D\'eriv\'ees Partielles'', Exp. No. XV, 22 pp., Univ. Nantes, Nantes, 2003.

\bibitem[Zw]{Zworski} M. Zworski, \textit{Semiclassical analysis}, Graduate Studies in Mathematics 138, American Mathematical Society, 2012. 

\end{thebibliography}
\end{document}